\newcounter{maint}
\numberwithin{equation}{section}
\def\k{\Bbbk}
\def\id{\mathrm{id}}
\begin{document}

\newtheorem{theorem}{Theorem}[section]

\newtheorem{lemma}[theorem]{Lemma}

\newtheorem{corollary}[theorem]{Corollary}
\newtheorem{proposition}[theorem]{Proposition}

\theoremstyle{remark}
\newtheorem{remark}[theorem]{Remark}

\theoremstyle{definition}
\newtheorem{definition}[theorem]{Definition}

\theoremstyle{definition}
\newtheorem{conjecture}[theorem]{Conjecture}

\newtheorem{example}[theorem]{Example}
\newtheorem{problem}[theorem]{Problem}
\newtheorem{question}[theorem]{Question}

\title[on Nichols algebras associated to Near-rack solutions]
{on Nichols algebras associated to Near-rack solutions of the Yang-Baxter equation}

\author[Shi]{Yuxing Shi }
\address{School of Mathematics and Statistics, and  Jiangxi Provincial Center for Applied Mathematics,  Jiangxi Normal University, Nanchang, Jiangxi 330022, People's Republic of China}
\email{yxshi@jxnu.edu.cn}

\makeatletter
\@namedef{subjclassname@2020}{\textup{2020} Mathematics Subject Classification}
\makeatother
\subjclass[2020]{16T05, 16T25, 17B37}
\thanks{
\textit{Keywords:} Yang-Baxter equation; Nichols algebra; Rack; T-equivalent.
}

\begin{abstract}
Let $(X, r)$ be  any set-theoretical non-degenerate solution  of the Yang-Baxter equation
and $(X, \tilde r)$ be the derived solution of $(X, r)$. 
As for any braided vector space $(W_{X, r}, c)$ associated to  $(X, r)$, 
is it possible to find some  braided vector space $(W_{X, \tilde r}, \tilde c)$ 
which is   t-equivalent to $(W_{X, r}, c)$? In case that  $(X, r)$ is a near-rack solution, we 
give a sufficient condition to make an affirmative answer to the question. 
Examples of t-equivalence are constructed, hence finite dimensional Nichols algebras 
are obtained. In particular, all finite dimensional Nichols algebras associated to 
involutive near-rack solutions are classified. 
\end{abstract}
\maketitle

\section{Introduction}

The paper is a contribution to the classification program of finite dimensional Nichols algebras. 
The Nichols algebra was first discovered by Nichols \cite{MR506406} and it has 
 broad applications in mathematics and 
physics, see for example  \cite{woronowicz1989differential, MR1227098, MR1632802, andruskiewitsch2001pointed, Lentner2021, Kapranov2020, Meir2022}. 
The Nichols algebra $\mathfrak{B}(V)$ is a connected graded 
braided Hopf algebra which is generated as an algebra by a  (rigid) braided vector space $V$ and completely determined by its braiding. The Yetter-Drinfeld modules over a Hopf algebra $H$ are examples 
of rigid braided vector spaces. 
As for Yetter-Drinfeld modules over a finite group algebra, their braidings  can be described by racks \cite{Andruskiewitsch2003MR1994219}. 
If any Yetter-Drinfeld module over a Hopf algebra $H$ is of set-theoretical type, then we say 
$H$ is set-theoretical. While studying Nichols algebras over the Suzuki algebras 
\cite{Shi2019, Shi2020even, Shi2020odd}, the author has come to realize that 
the Suzuki algebras are set-theoretical and finite dimensional 
semisimple Hopf algebras arising from abelian extension 
are conjectured to be  set-theoretical \cite{shi2023_NearRack}. 
Some problems were posed in \cite{shi2023_NearRack}: classify set-theoretical Hopf algebras and 
finite dimensional Nichols algebras of set-theoretical type. 

A braided vector space is said to be of set-theoretical type if it is associated to a 
set-theoretical solution of the Yang-Baxter equation. The Yang-Baxter equation was first introduced in physics \cite{Yang1967, Baxter1972} and set-theoretical solutions are closely related to  many algebraic structures such as  Hopf algebras, Nichols algebras, racks, (skew) braces, (relative) Rota-Baxter groups and so on; see for example \cite{Andruskiewitsch2003MR1994219, Rump2007, zbMATH06713497, 
Guo2021, Caranti2023, Bai2023, zbMATH07473025}.

It is obvious that set-theoretical type includes rack type. 
In case the braiding $c$ is of diagonal type (also called abelian rack type), the classification was completed  by Heckenberger \cite{heckenberger2009classification}
based on the theory of reflections  \cite{Heckenberger[2020]copyright2020}  and  Weyl groupoid \cite{MR2207786}, and the minimal presentation of these Nichols algebras was obtained by Angiono
\cite{Angiono2013, MR3420518}. In case $V$ is a  non-simple 
semisimple Yetter-Drinfeld module over non-abelian group
algebras, the classification were almost 
finished by  Heckenberger and  Vendramin \cite{Heckenberger2017, MR3605018}
under some technical assumptions.  
As for the case of indecomposable braided vector spaces of rack type, the classification problem is wildly open, except few examples are known, please refer to 
\cite[Table 9.1]{Heckenberger2015} and the references therein. 
Heckenberger, Lochmann and Vendramin conjectured  
that any finite dimensional elementary Nichols algebra of group type 
is $bg$-equivalent to one of those listed in 
\cite[Table 9.1]{Heckenberger2015}.

Let $(X, r)$  be a non-degenerate set-theoretical solution of the Yang-Baxter equation, then 
we can construct  braided vector spaces $(W_{X, r}, c)$ of set-theoretical type
and $(W_{X, \tilde r}, \tilde c)$ of rack type associated to 
the derived solution $(X, \tilde r)$ of $(X, r)$ with $\tilde r=TrT^{-1}$, 
where $W_{X, r}=W_{X, \tilde r}$ as vector spaces and  $T(x, y)=(\tau_y(x), y)$ for $x, y\in X$. 
The following question was  first considered by 
Andruskiewitsch and Gra\~{n}a in 
\cite[Example 5.11]{Andruskiewitsch2003MR1994219}. 
\begin{question}\label{MainQuestion}
As for any braided vector space $(W_{X, r}, c)$, 
is it possible to find some  braided vector space $(W_{X, \tilde r}, \tilde c)$ 
which is   t-equivalent to $(W_{X, r}, c)$? 
\end{question}
If so, then Nichols algebras $\mathfrak{B}(W_{X, r}, c)$ and $\mathfrak{B}(W_{X, \tilde r}, \tilde c)$
are isomorphic as graded vector spaces. So we can 
make use of  classification results of finite-dimensional Nichols algebras of rack type. 
Furthermore, generators and relations of $\mathfrak{B}(W_{X, r}, c)$ can be obtained according to those 
of $\mathfrak{B}(W_{X, \tilde r}, \tilde c)$ via t-equivalence, see Remark \ref{RelationNicholsAlg}. 
Unfortunately, this question is extremely complicated in general. 
In \cite{shi2023_NearRack}, the author showed that two braided vector spaces $(W, c)$ and $(W, \tilde c)$
are t-equivalent if there are two invertible maps  $\varphi_i: W \to W$ with $i\in\{1, 2\}$ such that 
$\tilde c=(\varphi_1^{-1}\otimes \varphi_2^{-1})c(\varphi_1\otimes \varphi_2)
=(\varphi_2^{-1}\otimes \varphi_1^{-1})c(\varphi_2\otimes \varphi_1)$. 
To make the braiding $\tilde c$ possibly satisfied the property, we require that the solution
$(X, r)$ is given by $r(x, y)=(\sigma_x(y), \tau(x))$ for any $x, y\in X$, where
$\tau\neq {\rm id}=\tau^2$. We call this solution a near-rack solution. 
As for near-rack solutions, a sufficient condition is given to make  
$\mathfrak{B}(W_{X, r}, c)$ t-equivalent to  some $\mathfrak{B}(W_{X, \tilde r}, \tilde c)$, see Theorem \ref{KeyTheorem}. 
If $(X, r)$ is the one in Examples  \ref{123Alt4}, \ref{1234S4}, \ref{Aff52}, \ref{Aff53}, 
\ref{12S4},
\ref{Aff73}, \ref{Aff75},  then the answer for Question \ref{MainQuestion} is affirmative. 
If  $(X, r)$ is the one in Examples \ref{T_equiv_DihedralD3}, \ref{T_Equiva_Second1234S4}, 
\ref{T_Equiva_First12S4}, 
then some constraints need to be added  on $W_{X, r}$ to give an affirmative answer to Question \ref{MainQuestion}.  
Those near-racks are related with racks: the dihedral rack $\Bbb D_3$, affine racks ${\rm Aff}(5,2)$, 
${\rm Aff}(5,3)$, ${\rm Aff}(5,3)$, ${\rm Aff}(7,5)$, and conjugate classes $(1, 2)^{\Bbb S_4}$, 
$(1, 2,3,4)^{\Bbb S_4}$, $(1,2,3)^{{\rm Alt}_4}$, which are in the list of finite dimensional Nichols algebras 
of non-abelian rack type \cite[Table 9.1]{Heckenberger2015}. 
Hence finite dimensional Nichols algebras are obtained via those t-equivalences. 

If $(X, r)$ is an involutive near-rack solution, then $\sigma_x=\tau$ for any $x\in X$, 
$\tau^2={\rm id}\neq \tau$ according to Proposition \ref{All_Involutive_NearRacks}.
The answer to Question \ref{MainQuestion} is affirmative for any involutive near-rack solution. 
Furthermore, all finite dimensional Nichols algebras associated to involutive near-rack solutions 
are classified, see Propositions \ref{LengthTwo}--\ref{Length_k}. 

The paper is organized as follows. 
In Section 2, we recall some basic notations and results on the Nichols algebra, 
set-theoretical solution of the Yang-Baxter equation and Rack. 
In Section 3,  we introduce the near-rack solution of the Yang-Baxter equation. 
Examples of near-rack solutions are presented. In particular, 
if $(X, r)$ is an involutive near-rack solution, then $\sigma_x=\tau$ for any $x\in X$, 
$\tau^2={\rm id}\neq \tau$. In Section 4,  we  give a sufficient condition to 
make an affirmative answer to Question \ref{MainQuestion} for the case that  $(X, r)$ is a near-rack solution. 
Examples of t-equivalence are constructed in Appendix. 
In Section 5, all finite dimensional Nichols algebras associated to 
involutive near-rack solutions are classified. 

\section{Preliminaries}
\subsection{Notations} 
Let $\k$ be  an algebraicaly  closed field of characteristic $0$ and $\Bbbk^\times$ be
$\Bbbk-\{0\}$. For integers $a\leq b$, we let 
$[a, b]\coloneqq\{a, a+1,\cdots,b-1, b\}$.  Let $\Bbb S_n$ denote the symmetric group consisting of 
permutations of $[1,n]$. We denote a cycle in a symmetric group as 
$\theta=(a_1, \cdots, a_t)$ where $\theta$ sends $a_1$ to $a_2$, and so on. For 
integer $m\geq 2$, $\Bbb G_m^\prime$ is the set of $m$-th primitive roots of unity. 

\subsection{Nichols algebras}
\begin{definition}
Let $V$ be a vector space. Then $(V, c)$ is called a braided vector space, if the linear isomorphism 
$c: V\otimes V\to V\otimes V$ is a solution of the braid equation, that is
$
(c\otimes {\rm id})({\rm id}\otimes c)(c\otimes {\rm id})=
({\rm id}\otimes c)(c\otimes {\rm id})({\rm id}\otimes c).
$
\end{definition}

\begin{definition}
Let $(V, c)$ be a braided vector space and $T(V)$ be the tensor algebra. 
Set $s_i=(i, i+1)\in \Bbb S_n$, 
$c_i={\rm id}^{\otimes (i-1)}\otimes c\otimes {\rm id}^{\otimes (n-i-1)}\in{\rm End}_{\Bbbk}(V^{\otimes n})$
for $i\in[1, n-1]$. The quantum symmetrizer  
\begin{equation}
\mathfrak S_{n,c}=\sum_{\sigma\in\Bbb S_n}\mathcal{T}(\sigma)\in{\rm End}_{\Bbbk}(V^{\otimes n}), 
\end{equation}
where $\mathcal T: \Bbb S_n\to {\rm End}_{\Bbbk}(V^{\otimes n})$ is defined as 
$\mathcal T(s_{i_1}s_{i_2}\cdots s_{i_t})=c_{i_1}c_{i_2}\cdots c_{i_t}$ for any reduced expression 
$\sigma=s_{i_1}s_{i_2}\cdots s_{i_t}\in\Bbb S_n$. Then the Nichols algebra
\begin{equation}
\mathfrak{B}(V, c)=\Bbbk\oplus V\oplus \bigoplus_{n=2}^{\infty}T^n(V)/\ker \mathfrak S_{n,c}. 
\end{equation}
\end{definition}
\begin{remark}
The notation $\mathfrak B(V)$ is short for $\mathfrak B(V, c)$. In general,  
the braided vector space $(V, c)$ is required to be rigid, which means that $(V, c)$ can be realized as a Yetter-Drinfeld module 
over some Hopf algebra. There are several equivalent definitions of the Nichols algebra $\mathfrak B(V, c)$
for different purposes. For more informations, please refer to  
\cite{Heckenberger[2020]copyright2020, andruskiewitsch2001pointed, MR1396857, MR1632802}. 
\end{remark}
\begin{example}
$\mathfrak S_{3, c}={\rm id}+c_1+c_2+c_1c_2+c_2c_1+c_1c_2c_1$. 
\end{example}

\begin{definition}\cite{Heckenberger[2020]copyright2020}
We say a braided vector space $(V, c)$ is of diagonal type, if $V=\bigoplus_{i\in I} \Bbbk v_i$ with 
$c(v_i\otimes v_j)=q_{ij}v_j\otimes v_i$ for any $i, j\in I$ and $q_{ij}\in\Bbbk^\times$. 
A generalized Dynkin  diagram of $(V, c)$ is a graph with $|I|$ vertices, where the $i$-th
vertex is labeled with $q_{ii}$ for all $i\in I$; further,  if $q_{ij}q_{ji}\neq  1$, then there is an edge between the $i$-th and $j$-th vertex labeled with $q_{ij}q_{ji}$. 
\end{definition}

\begin{definition}\cite[Definition 5.10]{Andruskiewitsch2003MR1994219}
\label{Remark_T_Equivalent}
Two braided vector spaces $(V, c)$ and $(W, \tilde c)$ are t-equivalent if there is a collection of 
linear isomorphisms $U_n: V^{\otimes n}\to W^{\otimes n}$ intertwining the corresponding representations 
of the braid group $\Bbb B_n$ for all $n\geq 2$. 
\end{definition}
\begin{remark}\cite[Lemma 6.1]{Andruskiewitsch2003MR1994219}
If braided vector space $(V, c)$ is t-equivalent to  $(W, \tilde c)$, then 
the corresponding Nichols algebras  $\mathfrak{B}(V)$ and $\mathfrak{B}(W)$ are isomorphic as 
graded vector spaces. Hence $\dim \mathfrak{B}(V)=\dim \mathfrak{B}(W)$.
\end{remark}

\begin{remark}\label{RelationNicholsAlg}
Definition \ref{Remark_T_Equivalent} tells us that $U_n\tilde c_i=c_iU_n$ which implies that 
$\mathfrak S_{n, c}=U_n\mathfrak S_{n, \tilde c}U_n^{-1}$. In other words, 
$v_n\in V^{\otimes n}$ is a relation of $\mathfrak{B}(V)$ if and only if $U_n^{-1}(v_n)$ is a relation of 
$\mathfrak{B}(W)$. 
\end{remark}

\begin{lemma}\label{MainLemma}\cite{shi2023_NearRack}
Let $(V, c)$ be a braided vector space, and $\varphi_1, \varphi_2: V\to V$ be two invertible linear maps. 
If 
\[
\tilde c=(\varphi_1^{-1}\otimes \varphi_2^{-1})c(\varphi_1\otimes \varphi_2)
=(\varphi_2^{-1}\otimes \varphi_1^{-1})c(\varphi_2\otimes \varphi_1),
\]
 then  $(V, \tilde c)$ is a braided vector space and is t-equivalent to $(V, c)$.
\end{lemma}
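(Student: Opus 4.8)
The plan is to produce, by hand, an explicit family of invertible intertwiners $U_n\colon V^{\otimes n}\to V^{\otimes n}$ for all $n\ge 2$, and then harvest both claims from it at once: invertibility of the $U_n$ together with the braid equation for $c$ will \emph{force} the braid equation for $\tilde c$ (so $(V,\tilde c)$ is braided), while the same family is exactly the data demanded by Definition \ref{Remark_T_Equivalent} (so the two spaces are t-equivalent). The only creative input is guessing the correct shape of $U_n$; once that is in place everything is a short local computation.

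First I would rewrite the hypothesis as two level-two intertwining identities. The assumption $\tilde c=(\varphi_1\otimes\varphi_2)^{-1}c(\varphi_1\otimes\varphi_2)=(\varphi_2\otimes\varphi_1)^{-1}c(\varphi_2\otimes\varphi_1)$ is equivalent to
\[
c\,(\varphi_1\otimes\varphi_2)=(\varphi_1\otimes\varphi_2)\,\tilde c,\qquad
c\,(\varphi_2\otimes\varphi_1)=(\varphi_2\otimes\varphi_1)\,\tilde c .
\]
These say that either block $\varphi_1\otimes\varphi_2$ \emph{or} the swapped block $\varphi_2\otimes\varphi_1$, when placed in two adjacent tensor slots, carries $\tilde c$ to $c$ in those slots while commuting with the identity elsewhere. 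Then I would define the \emph{alternating} tensor product $U_n=\psi_1\otimes\cdots\otimes\psi_n$ with $\psi_j=\varphi_1$ for $j$ odd and $\psi_j=\varphi_2$ for $j$ even, and set $\tilde c_i=\id^{\otimes(i-1)}\otimes\tilde c\otimes\id^{\otimes(n-i-1)}$ in parallel with the paper's $c_i$. For each $i\in[1,n-1]$ the slots $i,i+1$ carry the pair $(\psi_i,\psi_{i+1})$, which is $(\varphi_1,\varphi_2)$ when $i$ is odd and $(\varphi_2,\varphi_1)$ when $i$ is even; applying the matching identity above in those two slots and leaving all other slots untouched yields $U_n\tilde c_i=c_iU_n$ for every $i$. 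Each $U_n$ is invertible, being a tensor product of invertible maps.

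The two conclusions now drop out. Taking $n=3$ and using invertibility of $U_3$ we get $\tilde c_i=U_3^{-1}c_iU_3$, hence
\[
\tilde c_1\tilde c_2\tilde c_1=U_3^{-1}c_1c_2c_1U_3=U_3^{-1}c_2c_1c_2U_3=\tilde c_2\tilde c_1\tilde c_2,
\]
the middle equality being the braid equation for $c$; since $\tilde c$ is invertible, this is exactly the statement that $(V,\tilde c)$ is a braided vector space. Moreover, because $U_n\tilde c_i=c_iU_n$ for all $n\ge2$ and all $i$, the $U_n$ intertwine the two braid-group representations $\sigma_i\mapsto c_i$ and $\sigma_i\mapsto\tilde c_i$ of $\Bbb B_n$ (these are genuine representations precisely by the braid relations, which hold for $c$ by assumption and for $\tilde c$ by the $n=3$ computation above), so $(V,c)$ and $(V,\tilde c)$ are t-equivalent.

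The hard part is purely the discovery of $U_n$, not its verification. The naive ``staircase'' guess $\varphi_1\otimes\varphi_2\otimes\varphi_3\otimes\cdots$ fails: matching the $\tilde c_1$-slot forces $\varphi_1\otimes\varphi_2$ on the first two factors, while matching the $\tilde c_2$-slot forces $\varphi_1\otimes\varphi_2$ on the \emph{second and third}, and these are incompatible unless $\varphi_1=\varphi_2$. The key realization is that the \emph{second} equality in the hypothesis is exactly what repairs this conflict: it lets one use the swapped block $\varphi_2\otimes\varphi_1$ at even positions, so that the single alternating pattern $\varphi_1\otimes\varphi_2\otimes\varphi_1\otimes\varphi_2\otimes\cdots$ satisfies the intertwining relation at \emph{every} position simultaneously. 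I expect this to be the one step requiring genuine insight; the rest is formal.
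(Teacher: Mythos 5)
Your proposal is correct: the alternating intertwiner $U_n=\varphi_1\otimes\varphi_2\otimes\varphi_1\otimes\cdots$ satisfies $U_n\tilde c_i=c_iU_n$ in every slot precisely because the two displayed identities cover the odd and even positions respectively, and both conclusions follow as you describe. The paper itself gives no proof of this lemma (it is quoted from \cite{shi2023_NearRack}), but your construction is exactly the standard argument used there, so this is essentially the same approach.
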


\subsection{Set-theoretical solution of the Yang-Baxter equation}
A set-theoretical solution of the Yang-Baxter equation is a  pair $(X, r)$, where 
$X$   is a non-empty set and $r: X\times X\rightarrow X\times X$ is a bijective map
such that
\[
(r\times {\rm id})({\rm id}\times r)(r\times {\rm id})
=({\rm id}\times r)(r\times {\rm id})({\rm id}\times r)
\]
holds. By convention, we write 
\[
r(i, j)=(\sigma_i(j), \tau_j(i)),\quad  \forall i, j\in X.
\]
Two solutions $(X, r)$ and $(X, r^\prime)$ are isomorphic if there exists an invertible map 
$\varphi: X\to X$ such that $(\varphi\times \varphi)r=r^\prime (\varphi\times \varphi)$.
A solution $(X, r)$ is non-degenerate if all the maps $\sigma_i: X\rightarrow X$
and $\tau_i: X\rightarrow X$ are bijective for all $i\in X$, and involutive if 
$r^2=\id_{X\times X}$.

\begin{definition}\label{BraidedVectorSpace}
\cite[Lemma 5.7]{Andruskiewitsch2003MR1994219}
Let $(X, r)$ be a non-degenerate set-theoretical  solution of the Yang-Baxter equation, $|X|=m\in\Bbb Z^{\geq 2}$.
Then $W_{X, r}=\bigoplus_{i\in X}\k w_i$ is a braided vector 
space with the braiding given by 
\begin{align}\label{YBEquation}
c(w_{i}\otimes w_j)
&=R_{i,j} w_{\sigma_i(j)}\otimes w_{\tau_j(i)}, \quad 
\text{where}\,\, R_{i,j}\in\k^\times\,\,\text{and} \\ 
R_{i, j}R_{\tau_j(i), k}R_{\sigma_i(j),\sigma_{\tau_j(i)}(k)}
&=R_{j,k}R_{i,\sigma_j(k)}R_{\tau_{\sigma_j(k)}(i),\tau_k(j)}, 
\quad \forall i, j, k\in X. \label{YBEquation}
\end{align}
The braided vector space  $W_{X, r}$ is called of set-theoretical type. 
\end{definition}

\begin{remark}
The braiding of $W_{X,r}$ is rigid according to 
\cite[Lemma 3.1.3]{Schauenburg1992}.
\end{remark}

\subsection{Racks}
\begin{definition}
Let $X$ be a non-empty set, then $(X,\rhd)$ is a rack if $\rhd: X\times X\to X$ is a function, such that 
$\phi_i: X\to X$, $\phi_i(j)=i\rhd j$,  is bijection for all $i\in X$ and 
\begin{align}
i\rhd(j\rhd k)=(i\rhd j)\rhd (i\rhd k), \quad \forall i, j, k\in X.
\end{align}
\end{definition}

\begin{remark}\cite{Brieskorn1988}
$(X, \rhd)$ is a rack if and only if $(X, r)$ is a set-theoretical solution of the Yang-Baxter equation, where 
$r(x, y)=(x\rhd y, x)$ for all $x, y\in X$. 
\end{remark}

\begin{example}\cite{zbMATH01585085, LuJianghua2000MR1769723}
\label{SetRackRelations}
Let $(X, r)$ be a non-degenerate set-theoretical solution of the Yang-Baxter equation. Then 
$(X, \rhd)$ is a rack with $x\rhd y=\tau_x\sigma_{\tau_y^{-1}(x)}(y)$ for all $x, y\in X$. 
Let $T: X\times X\rightarrow X\times X$ with $T(x, y)=(\tau_y(x), y)$, then 
$T$ is invertible and $T^{-1}(x, y)=(\tau_y^{-1}(x), y)$. We have 
\begin{align}
TrT^{-1}(x, y)=(x\rhd y, x).
\end{align}
The solution $(X, TrT^{-1})$ is called the derived solution of $(X, r)$. 
\end{example}

\begin{remark}\cite[Example 5.11]{Andruskiewitsch2003MR1994219}
Let $W_{X, r}$ be a braided vector space of set-theoretical type and $(X, \tilde r)$ be 
the derived solution of $(X, r)$. Set $\tilde c(w_i\otimes w_j)=q_{ij}w_{i\rhd j}\otimes w_i$, where
$q_{ij}=R_{\sigma_j^{-1}(i), j}$ for all $i, j\in X$. 
If $q_{\sigma_k(i), \sigma_k(j)}=q_{ij}$ for all $i, j, k\in X$, then 
the braided vector spaces $(W_{X, r}, c)$ and $(W_{X, \tilde r}, \tilde c)$ are 
t-equivalent.  
\end{remark}

\begin{example}\label{TypeD}
Let $\Bbb D_n=(\Bbb Z_n, \rhd)$, $i\rhd j=2i-j\in\Bbb Z_n$ for all $i, j\in \Bbb Z_n$. 
Then $\Bbb D_n$ is a rack and called a dihedral rack.  
\end{example}

\begin{example}\cite{Andruskiewitsch2003MR1994219}
Let $A$ be an abelian group and $f\in {\rm Aut}\, A$. The affine rack ${\rm Aff}(A, f)$ is the set 
$A$ with the operation
\[
a\rhd b=f(b)+({\rm id}-f)(a),\qquad \forall a, b\in A. 
\]
\end{example}

\section{The near-rack solution of the Yang-Baxter equation}
\begin{definition}
Let $(X, r)$ be a non-degenerate set-theoretical solution of the Yang-Baxter equation. 
Denote $r(x, y)=(\sigma_x(y), \tau_y(x))$. If $\tau=\tau_y$ for all $y\in X$, $\tau\neq {\rm id}$ 
and $\tau^2={\rm id}$, then we call $(X, r)$ a near-rack solution of the Yang-Baxter equation. 
\end{definition}

\begin{remark}
According to the definition of set-theoretical solution of the Yang-Baxter equation, we have
\begin{equation}\label{YB_id}
\sigma_{\sigma_x(y)}\sigma_{\tau(x)}=\sigma_x\sigma_y,\qquad
\tau\sigma_x=\sigma_{\tau(x)}\tau, \qquad \forall x, y\in X.
\end{equation}
\end{remark}

\begin{lemma}
Let $(X, r)$ be a near-rack solution of the Yang-Baxter equation and 
$\tilde r=(\tau\times {\rm id})r(\tau\times {\rm id})$. Then $(X, \tilde r)$ is the derived solution of $(X, r)$ and 
\[
\tilde r=(\tau\times {\rm id})r(\tau\times {\rm id})=({\rm id}\times \tau)r({\rm id}\times \tau).
\] 
\end{lemma}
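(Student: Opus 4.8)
The plan is to reduce everything to the two identities recorded in \eqref{YB_id}. First I would observe that, since $(X,r)$ is a near-rack solution, the twist map $T$ of Example \ref{SetRackRelations} specializes to $T(x,y)=(\tau_y(x),y)=(\tau(x),y)$, and because $\tau^2=\id$ we likewise get $T^{-1}(x,y)=(\tau_y^{-1}(x),y)=(\tau(x),y)$. Hence $T=T^{-1}=\tau\times\id$, so the derived solution $TrT^{-1}$ coincides with $\tilde r=(\tau\times\id)r(\tau\times\id)$. This settles the first assertion with no further computation.

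Next I would prove the second equality by evaluating both composites on an arbitrary pair $(x,y)$. On the left,
\[
(\tau\times\id)\,r\,(\tau\times\id)(x,y)=(\tau\times\id)\,r(\tau(x),y)=(\tau\sigma_{\tau(x)}(y),\,x),
\]
where the second coordinate has been simplified via $\tau(\tau(x))=x$. On the right,
\[
(\id\times\tau)\,r\,(\id\times\tau)(x,y)=(\id\times\tau)\,r(x,\tau(y))=(\sigma_x\tau(y),\,x),
\]
again using $\tau^2=\id$. The two sides thus agree in the (trivial) second coordinate, and the identity reduces to the single assertion $\tau\sigma_{\tau(x)}=\sigma_x\tau$ as maps $X\to X$.

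The key step, and the only place where the Yang–Baxter structure is actually used, is to obtain this from the intertwining relation $\tau\sigma_x=\sigma_{\tau(x)}\tau$ in \eqref{YB_id}. Substituting $\tau(x)$ for $x$ and invoking $\tau^2=\id$ gives $\tau\sigma_{\tau(x)}=\sigma_{\tau(\tau(x))}\tau=\sigma_x\tau$, which is precisely what is needed; applying both sides to $y$ yields $\tau\sigma_{\tau(x)}(y)=\sigma_x\tau(y)$ and finishes the argument. I do not expect a genuine obstacle here: the entire content is the bookkeeping of $\tau$ as an involution together with that intertwining relation, so the only thing to watch is which coordinate $\tau$ acts on, and the harmless identification of $\tau$ with $\tau^{-1}$.
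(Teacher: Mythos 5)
Your proof is correct and follows essentially the same route as the paper: evaluate both composites on $(x,y)$ and reduce the identity to $\tau\sigma_{\tau(x)}=\sigma_x\tau$, which follows from the intertwining relation $\tau\sigma_x=\sigma_{\tau(x)}\tau$ of \eqref{YB_id} together with $\tau^2=\id$. Your explicit observation that $T=T^{-1}=\tau\times\id$, which settles the ``derived solution'' claim, is a small but welcome addition that the paper leaves implicit.
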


\begin{remark}
This Lemma shows that it is possible to establish a t-equivalent relationship between braided vector spaces $W_{X, r}$
and $W_{X, \tilde r}$. 
\end{remark}

\begin{proof}
For any $x, y\in X$, we have 
\begin{align*}
\tilde r(x, y)&=(\tau\times {\rm id})r(\tau\times {\rm id})(x, y)
=(\tau\times {\rm id})r(\tau(x), y)\\
&=(\tau\times {\rm id})(\sigma_{\tau(x)}(y), \tau^2(x))\\
&=(\tau\sigma_{\tau(x)}(y), x)\\
&=({\rm id}\times \tau)(\tau^2\sigma_x\tau(y), \tau(x))=({\rm id}\times \tau)(\sigma_x\tau(y), \tau(x))\\
&=({\rm id}\times \tau)r(x, \tau(y))\\
&=({\rm id}\times \tau)r({\rm id}\times \tau)(x, y).
\end{align*}
\end{proof}

\begin{example}\cite{Gu1997}
Let $G$ be a group, $\tau$ be a map from $G$ to itself. Then 
\[
r(x, y)=\left(xy\tau(x)^{-1}, \tau(x)\right), \quad \forall x, y\in G, 
\]
is a solution of the Yang-Baxter equation if and only if 
\[\tau\left(xy\tau(x)^{-1}\right)=\tau(x)\tau(y)\tau^2(x)^{-1}.\] 
Here $\tau$ is called metahomomorphism on group $G$, 
please see \cite{Ding2006}, \cite{Ding2009} and \cite{Castelli2019} for more informations. 
If this solution is non-degenerate and $ \tau^2={\rm id}\neq \tau$, then it is a near-rack solution. 
\end{example}

\begin{example}
$(X, r)$ is  a near-rack solution with $X=[1, 2n]$ and 
\begin{align*}
b+2a-2&=2ng+d,\quad g\in\Bbb{N},\quad 0\leq d\leq 2n-1,\\
2n+1-b+2a-2&=2ne+f,\quad e\in\Bbb{N},\quad 0\leq f\leq 2n-1,\\
r(a,b)&=\left\{\begin{array}{ll}
(b,1), &a=1,\\
(2n, 2n-a+2), &a>1, d=0, 2\mid (a+b),\\
(d, 2n-a+2), &a>1, d>0, 2\mid (a+b),\\
(1, 2n-a+2), &a>1, f=0, 2\nmid (a+b),\\
(2n+1-f, 2n-a+2), &a>1, f>0, 2\nmid (a+b).
\end{array}\right.
\end{align*}
The Yetter-Drinfeld module $\mathcal K_{jk,p}^s$ in \cite{shi2023_NearRack} and 
\cite{Shi2020even}
is associated to this
near-rack solution $(X, r)$. Furthermore, 
$\mathcal K_{jk,p}^s$ is t-equivalent to a braided vector space of  type dihedral rack $\Bbb D_{2n}$.
\end{example}

\begin{example} 
Let $X=[1, 2n+1]$, $L^\gamma$ and $R^\gamma$ be two maps from $X\times X$
to itself such that
\begin{align*}
R^\gamma(a,b)&=\left\{\begin{array}{ll}
(b+\gamma, 2n-a+2), &b+\gamma\leq 2n+1,\\
(2n+1, 2n-a+2), &b+\gamma=2n+2,\\
(4n+3-\gamma-b, 2n-a+2), &b+\gamma\geq 2n+3,
\end{array}\right.\\
L^\gamma(a,b)&=\left\{\begin{array}{ll}
(b-\gamma, 2n-a+2), &\gamma<b,\\
(\gamma-b+1, 2n-a+2), &b\leq \gamma\leq b+2n.
\end{array}\right.
\end{align*}
Then $(X, r)$ is a near-rack solution with $r$ described as follows:
\begin{align*}
r(a, b)=\left\{\begin{array}{ll}
(b, 2n-a+2), &a=n+1,\\
L^{2(n-a+1)}(a, b), &a<n+1, 2\mid (a+b),\\
L^{2(a-n-1)}(a, b), &a>n+1, 2\nmid (a+b),\\
R^{2(n-a+1)}(a, b), &a<n+1, 2\nmid (a+b),\\
R^{2(a-n-1)}(a, b), &a>n+1, 2\mid (a+b).
\end{array}\right. 
\end{align*}
The Yetter-Drinfeld module $\mathcal N_{k,pq}^s$ appeared in \cite{Shi2020odd} and 
\cite{shi2023_NearRack} is associated to this solution $(X, r)$ and $\mathcal N_{k,pq}^s$
is t-equivalent to a braided vector space of type dihedral rack $\Bbb D_{2n+1}$. 
\end{example}

\begin{lemma}
If $(X, r)$ is a near-rack solution, then $r(x,y)=(x, y)$ if and only if 
$\sigma_x\tau(x)=x$ and $y=\tau(x)$.
\end{lemma}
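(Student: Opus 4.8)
The plan is simply to read off the two coordinates of $r$. Since $(X,r)$ is a near-rack solution we have $\tau_y = \tau$ for all $y \in X$, so the defining formula $r(x,y) = (\sigma_x(y), \tau_y(x))$ specializes to $r(x,y) = (\sigma_x(y), \tau(x))$. Hence the single equality $r(x,y) = (x,y)$ is equivalent to the pair of coordinate equations $\sigma_x(y) = x$ and $\tau(x) = y$, and the whole lemma reduces to manipulating these two equations. First I would make this observation explicit, since it is the only structural input the argument requires.

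For the direction ``$\Rightarrow$'', I would use the second coordinate equation $\tau(x) = y$ to record $y = \tau(x)$, and then substitute this value of $y$ into the first coordinate equation $\sigma_x(y) = x$, obtaining $\sigma_x(\tau(x)) = x$, that is $\sigma_x\tau(x) = x$. For the converse ``$\Leftarrow$'', I would assume $y = \tau(x)$ together with $\sigma_x\tau(x) = x$ and simply evaluate:
\[
r(x,y) = r(x, \tau(x)) = (\sigma_x(\tau(x)), \tau(x)) = (\sigma_x\tau(x), \tau(x)) = (x, \tau(x)) = (x,y).
\]
The only point requiring the slightest attention is that the first coordinate equation $\sigma_x(y) = x$ cannot be interpreted in isolation and must be coupled with the relation $y = \tau(x)$ coming from the second coordinate; once this coupling is made, the equivalence is immediate in both directions. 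In particular there is no genuine obstacle here: the statement is a direct unwinding of the explicit form of $r$ for near-rack solutions, and neither the compatibility relations \eqref{YB_id} nor the full strength of the hypotheses $\tau \neq {\rm id} = \tau^2$ are needed beyond guaranteeing that we are indeed in the near-rack setting where $\tau_y$ is independent of $y$.
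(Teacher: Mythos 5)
Your proposal is correct and is essentially the paper's own argument: the paper's proof is the one-line observation that $r(x,y)=(\sigma_x(y),\tau(x))=(x,y)$ if and only if $\sigma_x(y)=x$ and $y=\tau(x)$, and your substitution of $y=\tau(x)$ into the first coordinate is exactly the implicit step there. No discrepancy to report.
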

\begin{proof}
$r(x,y)=(\sigma_x(y), \tau(x))=(x, y)$ if and only if $\sigma_x(y)=x$ and $y=\tau(x)$. 
\end{proof}

\begin{corollary}\label{FixedPoints}
If $(X, r)$ is a near-rack solution and $r(x, \tau(x))=(x, \tau(x))$, then $r(\tau(x), x)=(\tau(x), x)$. 
\end{corollary}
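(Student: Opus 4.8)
The plan is to use the previous lemma, which characterizes fixed points of a near-rack solution: namely, $r(x,y)=(x,y)$ if and only if $\sigma_x\tau(x)=x$ and $y=\tau(x)$. Applying this to the hypothesis $r(x,\tau(x))=(x,\tau(x))$ directly gives us the single piece of information we may assume, namely $\sigma_x\tau(x)=x$ (the condition $y=\tau(x)$ being automatically built into the statement). The goal is to deduce $r(\tau(x),x)=(\tau(x),x)$, and by the same characterization applied with $\tau(x)$ in place of $x$, this is equivalent to proving the two conditions $\sigma_{\tau(x)}\tau(\tau(x))=\tau(x)$ and $x=\tau(\tau(x))$. Since $\tau^2=\mathrm{id}$ by definition of a near-rack solution, the second condition $x=\tau^2(x)$ holds for free. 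So everything reduces to showing the single identity $\sigma_{\tau(x)}\tau(x)=\tau(x)$, using the fact $\tau^2=\mathrm{id}$ to rewrite $\tau(\tau(x))=x$ in the expression $\sigma_{\tau(x)}\tau(\tau(x))$.

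First I would invoke the relation $\sigma_x\tau(x)=x$ coming from the hypothesis. The natural tool for transferring this to an identity involving $\sigma_{\tau(x)}$ is the braid/Yang-Baxter compatibility recorded in equation \eqref{YB_id}, in particular the intertwining relation $\tau\sigma_x=\sigma_{\tau(x)}\tau$. Reading this as $\sigma_{\tau(x)}=\tau\sigma_x\tau^{-1}=\tau\sigma_x\tau$ (again using $\tau^2=\mathrm{id}$), I can compute $\sigma_{\tau(x)}(\tau(x))=\tau\sigma_x\tau(\tau(x))=\tau\sigma_x(x)$. At this point I would like to relate $\sigma_x(x)$ to the known quantity $\sigma_x\tau(x)=x$, but there is a mismatch: the hypothesis controls $\sigma_x$ applied to $\tau(x)$, whereas the intertwining relation has led me to $\sigma_x$ applied to $x$.

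The main obstacle is exactly this mismatch between $\sigma_x(x)$ and $\sigma_x(\tau(x))$. To close the gap I expect to need the first relation in \eqref{YB_id}, $\sigma_{\sigma_x(y)}\sigma_{\tau(x)}=\sigma_x\sigma_y$, perhaps specialized at a convenient value of $y$, to produce a second equation linking these two evaluations. A promising specialization is $y=x$ (or $y=\tau(x)$), which should, in combination with the intertwining relation and the hypothesis $\sigma_x\tau(x)=x$, force $\sigma_x(x)=\tau(x)$; feeding that back yields $\sigma_{\tau(x)}(\tau(x))=\tau\sigma_x(x)=\tau(\tau(x))=\tau(x)$, which is precisely the desired identity $\sigma_{\tau(x)}\tau(x)=\tau(x)$. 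Once this is established, the previous lemma immediately delivers $r(\tau(x),x)=(\tau(x),x)$, completing the proof. I anticipate that the only genuine work is the short algebraic manipulation of the two relations in \eqref{YB_id} to extract $\sigma_x(x)=\tau(x)$; everything else is a direct application of the fixed-point lemma together with $\tau^2=\mathrm{id}$.
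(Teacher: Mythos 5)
Your overall strategy is the right one and is essentially the paper's: reduce, via the fixed-point lemma, to a single identity about $\sigma_{\tau(x)}$ and resolve it with the intertwining relation $\tau\sigma_x=\sigma_{\tau(x)}\tau$ from \eqref{YB_id}. But you derail at the reduction step. The lemma applied to the pair $(\tau(x),x)$ asks for $\sigma_{\tau(x)}(\tau(\tau(x)))=\tau(x)$, which after $\tau^2=\mathrm{id}$ becomes $\sigma_{\tau(x)}(x)=\tau(x)$ --- that is, $\sigma_{\tau(x)}$ evaluated at $x$, not at $\tau(x)$. You instead write the target as $\sigma_{\tau(x)}\tau(x)=\tau(x)$ and accordingly compute $\sigma_{\tau(x)}(\tau(x))=\tau\sigma_x(x)$; that is why you hit the ``mismatch'' with the hypothesis $\sigma_x(\tau(x))=x$. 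The obstacle is an artifact of evaluating at the wrong point: with the correct target the same one-line computation closes immediately,
$\sigma_{\tau(x)}(x)=\tau\sigma_x\tau(x)=\tau\bigl(\sigma_x(\tau(x))\bigr)=\tau(x)$,
which is exactly what the paper does (it writes $r(\tau(x),x)=(\sigma_{\tau(x)}(x),\tau^2(x))=(\tau\sigma_x\tau(x),x)=(\tau(x),x)$).

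The repair you propose for the artificial obstacle cannot work. The auxiliary identity $\sigma_x(x)=\tau(x)$ is false in general: in the near-rack solution attached to the dihedral rack $\Bbb D_3$ ($\sigma_1=\mathrm{id}$, $\sigma_2=(1,3,2)$, $\sigma_3=(1,2,3)$, $\tau=(2,3)$), the element $x=2$ satisfies the hypothesis since $\sigma_2(\tau(2))=\sigma_2(3)=2$, yet $\sigma_2(2)=1\neq 3=\tau(2)$. Your intended intermediate goal $\sigma_{\tau(x)}(\tau(x))=\tau(x)$ is itself false there, since $\sigma_3(3)=1\neq 3$, whereas the correct goal $\sigma_3(2)=3$ does hold. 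Finally, the closing chain contains the slip $\tau(\tau(x))=\tau(x)$; it equals $x$. So the proof as written cannot be completed, but replacing the evaluation point $\tau(x)$ by $x$ in the key identity repairs it and makes the whole argument a direct application of \eqref{YB_id} and the hypothesis.
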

\begin{proof}
The identity  $r(x, \tau(x))=(x, \tau(x))$ implies  $\sigma_x\tau(x)=x$. We have
\begin{align*}
r(\tau(x), x)=(\sigma_{\tau(x)}(x), \tau^2(x))
=(\tau\sigma_x\tau(x), x)
=(\tau(x), x). 
\end{align*}
\end{proof}

\begin{lemma}
If $(X, r)$ is a near-rack solution, then $\sigma_x=\sigma_{\sigma_x\tau(x)}$ for any $x\in X$. 
\end{lemma}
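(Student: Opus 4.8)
The plan is to derive the claim directly from the first of the two braiding identities recorded in \eqref{YB_id}, which for a near-rack solution reads
\[
\sigma_{\sigma_x(y)}\,\sigma_{\tau(x)}=\sigma_x\sigma_y,\qquad \forall\, x,y\in X.
\]
The only additional input is the non-degeneracy hypothesis, which guarantees that each map $\sigma_z\colon X\to X$ is a bijection. No use of the second identity $\tau\sigma_x=\sigma_{\tau(x)}\tau$ seems necessary for this particular statement.

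The key step is a single specialization: I set $y=\tau(x)$ in the identity above. Since the relation holds for all $x,y\in X$, this substitution is legitimate. The left-hand subscript then becomes $\sigma_x(\tau(x))=\sigma_x\tau(x)$, while the right-hand side becomes $\sigma_x\sigma_{\tau(x)}$, so that
\[
\sigma_{\sigma_x\tau(x)}\,\sigma_{\tau(x)}=\sigma_x\,\sigma_{\tau(x)}.
\]

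To finish, I cancel the common factor $\sigma_{\tau(x)}$ appearing on the right of both sides. This is the only place where the hypotheses are actually invoked: non-degeneracy makes $\sigma_{\tau(x)}$ an invertible self-map of $X$, so composing both sides on the right with $\sigma_{\tau(x)}^{-1}$ yields $\sigma_{\sigma_x\tau(x)}=\sigma_x$, which is exactly the assertion.

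There is essentially no genuine obstacle here: once one chooses the substitution $y=\tau(x)$, the statement is an immediate corollary of \eqref{YB_id}, and the whole argument amounts to one composition together with an appeal to bijectivity. The only point requiring minor care is bookkeeping, since $\tau(x)$ occurs both as the specialized value of $y$ and inside the surviving factor $\sigma_{\tau(x)}$ that is being cancelled; because $\tau$ is a fixed map with $\tau^2=\mathrm{id}$, no further manipulation of $\tau$ is needed, and the cancellation is unambiguous.
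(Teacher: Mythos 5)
Your proof is correct and is essentially the paper's own argument: the paper likewise specializes the identity $\sigma_{\sigma_x(y)}\sigma_{\tau(x)}=\sigma_x\sigma_y$ at $y=\tau(x)$ to get $\sigma_{\sigma_x\tau(x)}\sigma_{\tau(x)}=\sigma_x\sigma_{\tau(x)}$ and then cancels the bijection $\sigma_{\tau(x)}$. Your write-up just makes explicit the substitution and the appeal to non-degeneracy that the paper leaves implicit.
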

\begin{proof}
The formula
$\sigma_x\sigma_{\tau(x)}=\sigma_{\sigma_{x}\tau(x)}\sigma_{\tau(x)}$ implies that 
$\sigma_x=\sigma_{\sigma_x\tau(x)}$.
\end{proof}

\begin{corollary}
If $(X, r)$ is a near-rack solution with the property that
 $\sigma_x=\sigma_y$ if and only if $x=y$ for any $x, y\in X$,
then $\sigma_x\tau (x)=x$ for any $x\in X$ and 
\[
|\{(x, y)\mid r(x, y)=(x, y), x, y\in X\}|=|X|. 
\] 
\end{corollary}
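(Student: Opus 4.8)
The plan is to obtain both assertions as formal consequences of the two lemmas immediately preceding, so that essentially no fresh computation is required.

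First I would establish the pointwise identity $\sigma_x\tau(x) = x$ for every $x \in X$. The Lemma just above this corollary already furnishes $\sigma_x = \sigma_{\sigma_x\tau(x)}$ for all $x$. The standing hypothesis is precisely the injectivity of the assignment $z \mapsto \sigma_z$; applying it to the two points $x$ and $\sigma_x\tau(x)$, whose associated maps coincide, forces $x = \sigma_x\tau(x)$. This settles the first claim at once, with no additional work.

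Second I would count the fixed points of $r$. The Lemma preceding Corollary \ref{FixedPoints} characterises them: $r(x,y) = (x,y)$ holds if and only if $\sigma_x\tau(x) = x$ and $y = \tau(x)$. Since the first of these two conditions now holds automatically for every $x$ by the step above, the fixed-point set collapses to $\{(x,\tau(x)) : x \in X\}$; that is, for each $x$ there is exactly one $y$, namely $y = \tau(x)$, making $(x,y)$ a fixed point. The first-coordinate projection is then a bijection from this set onto $X$, whence the cardinality equals $|X|$.

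I do not anticipate a genuine obstacle, since the corollary is a direct distillation of the injectivity hypothesis together with the earlier lemmas. The only step deserving a moment's attention is the concluding count: because $\tau$ is a single-valued map, each $x$ contributes exactly one fixed point $(x,\tau(x))$, and distinct choices of $x$ yield distinct pairs, so there is neither overcounting nor undercounting.
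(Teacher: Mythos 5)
Your argument is correct and is exactly the intended deduction: the paper states this corollary without proof, leaving it to follow from the two preceding lemmas precisely as you have written (injectivity of $x\mapsto\sigma_x$ applied to $\sigma_x=\sigma_{\sigma_x\tau(x)}$, then the fixed-point characterisation giving the bijection $x\mapsto(x,\tau(x))$). Nothing is missing.
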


\begin{proposition}\label{ConstructionNearRack}
There is a bijective correspondence between  near-rack solutions and
the set $\left\{(X, \rhd), \tau\in \Bbb S_{|X|}\right\}$, where $(X, \rhd)$ is a rack with 
\begin{align}
\tau^2={\rm id}\neq \tau, \qquad \tau(\tau(x)\rhd y)&=x\rhd \tau(y), \quad \forall x, y\in X. 
\end{align}
\end{proposition}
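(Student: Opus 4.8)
The plan is to write the correspondence down explicitly as a pair of mutually inverse assignments and then to check each lands in the stated target set. Given a near-rack solution $(X,r)$ with $r(x,y)=(\sigma_x(y),\tau(x))$, I would attach the pair $\big((X,\rhd),\tau\big)$, where $(X,\rhd)$ is the rack produced by Example~\ref{SetRackRelations}; since here $\tau_y=\tau$ and $\tau^{-1}=\tau$ for all $y$, its defining formula specializes to $x\rhd y=\tau\sigma_{\tau(x)}(y)$, so $(X,\rhd)$ is automatically a rack and $\tau$ is already an involution with $\tau\neq\mathrm{id}$. Conversely, given a rack $(X,\rhd)$ and an involution $\tau$ satisfying the stated compatibility, I would solve the previous relation for $\sigma$, setting $\sigma_x(y)=\tau(\tau(x)\rhd y)$ and $r(x,y)=(\sigma_x(y),\tau(x))$.

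Before checking anything I would record the crucial reformulation of the compatibility condition. Writing $\phi_i(\,\cdot\,)=i\rhd(\,\cdot\,)$, the substitution $x\mapsto\tau(x)$ together with $\tau^2=\mathrm{id}$ turns $\tau(\tau(x)\rhd y)=x\rhd\tau(y)$ into $\tau(x\rhd y)=\tau(x)\rhd\tau(y)$, that is, $\tau$ is a rack automorphism; equivalently $\phi_c\,\tau=\tau\,\phi_{\tau(c)}$ and $\phi_{\tau(c)}\,\tau=\tau\,\phi_c$. This single identity drives every subsequent computation, so I would isolate it at the outset.

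For the direction near-rack $\to(\rhd,\tau)$ the only point to verify is the compatibility, and it is immediate from the second relation of \eqref{YB_id}: with $x\rhd y=\tau\sigma_{\tau(x)}(y)$ one gets $\tau(\tau(x)\rhd y)=\tau^2\sigma_x(y)=\sigma_x(y)$ and, using $\sigma_{\tau(x)}\tau=\tau\sigma_x$, also $x\rhd\tau(y)=\tau\sigma_{\tau(x)}\tau(y)=\sigma_x(y)$. For the direction $(\rhd,\tau)\to$ near-rack, non-degeneracy is easy, since $\sigma_x=\tau\phi_{\tau(x)}$ and $\tau$ are bijections. \emph{The main obstacle is the Yang--Baxter equation itself.} For a map $r(x,y)=(\sigma_x(y),\tau(x))$ with $\tau^2=\mathrm{id}$ one checks that the three braiding conditions collapse to exactly the two identities $\sigma_{\sigma_x(y)}\sigma_{\tau(x)}=\sigma_x\sigma_y$ and $\tau\sigma_x=\sigma_{\tau(x)}\tau$ of \eqref{YB_id} (the middle condition being trivial, and the third losing its $z$-dependence because $\tau_\bullet$ is constant), so it suffices to verify these two. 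The second follows at once from the automorphism identity. For the first I would pass everything through $\phi$: using $\sigma_a=\tau\phi_{\tau(a)}$ and $\phi_c\tau=\tau\phi_{\tau(c)}$ one computes $\sigma_x\sigma_y=\phi_x\phi_{\tau(y)}$ and $\sigma_{\sigma_x(y)}\sigma_{\tau(x)}=\phi_{x\rhd\tau(y)}\phi_x$, and the two sides agree precisely by self-distributivity $\phi_{a\rhd b}\phi_a=\phi_a\phi_b$ with $a=x$ and $b=\tau(y)$. Arranging the bookkeeping so that the stray copies of $\tau$ cancel is the delicate part.

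Finally I would check that the two composites are the identity. Starting from a near-rack solution, the reconstructed map is $\sigma'_x(y)=\tau(\tau(x)\rhd y)=\tau(\tau\sigma_x(y))=\sigma_x(y)$, so $r$ is recovered; starting from $(\rhd,\tau)$, the induced rack is $x\rhd'y=\tau\sigma_{\tau(x)}(y)=\tau^2(x\rhd y)=x\rhd y$, while $\tau$ is carried along unchanged. Hence the two assignments are mutually inverse, which gives the desired bijection.
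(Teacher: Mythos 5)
Your proposal is correct, and the forward direction, the explicit description of the two mutually inverse assignments, and the verification that the composites are the identity all coincide with what the paper does. The genuine difference is in the key step of the converse direction, namely showing that $r(x,y)=(\sigma_x(y),\tau(x))$ with $\sigma_x=\tau\phi_{\tau(x)}$ satisfies the Yang--Baxter equation. You reduce the braid relation to its two nontrivial componentwise identities $\sigma_{\sigma_x(y)}\sigma_{\tau(x)}=\sigma_x\sigma_y$ and $\tau\sigma_x=\sigma_{\tau(x)}\tau$ (the identities of \eqref{YB_id}) and verify them directly from self-distributivity and the reformulation of the compatibility condition as ``$\tau$ is a rack automorphism,'' which is a nice observation in its own right. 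The paper instead avoids any componentwise analysis: it sets $T=\tau\times{\rm id}\times\tau$, notes $T^2={\rm id}$, $r\times{\rm id}=T(\tilde r\times{\rm id})T$ and ${\rm id}\times r=T({\rm id}\times\tilde r)T$ where $\tilde r(x,y)=(x\rhd y,x)$, so that the braid relation for $r$ follows from that for $\tilde r$ by conjugation. The paper's argument is shorter and more conceptual (it exhibits $r$ as simultaneously a left and right twist of the rack solution, which is exactly the mechanism exploited later for t-equivalence), while yours is more elementary and self-contained at the level of the maps $\sigma$ and $\phi$. The one place where you lean on an unstated fact is the claim that the Yang--Baxter equation is \emph{equivalent} to the three componentwise conditions; this is standard (and I checked that with constant involutive $\tau$ the three conditions do collapse to exactly the two identities you list), but since you need the implication from \eqref{YB_id} back to the YBE --- not just the direction recorded in the paper's remark --- you should either cite the standard decomposition or include the short computation.
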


\begin{remark}
This Proposition provides an algorithm to enumerate near-rack solutions
according to enumeration of  racks. 
\end{remark}

\begin{proof}
If $(X, r)$  is a near-rack solution, then $r(x, y)=(\sigma_x(y), \tau(x))$ with $\tau^2={\rm id}\neq \tau$. 
There is a rack structure on $X$ with $x\rhd y=\tau\sigma_{\tau(x)}(y)$, which implies that 
\begin{align*}
\tau(\tau(x)\rhd y)
=\tau\tau\sigma_{\tau^2(x)}(y)
=\sigma_{x}(y)
=\tau\sigma_{\tau(x)}\tau(y)=x\rhd \tau(y). 
\end{align*}
If $(X, \rhd)$ is a rack with given conditions, we set $\tilde r(x, y)=(x\rhd y, x)$ and 
$r=(\tau\times {\rm id})\tilde r(\tau\times {\rm id})$. Then we have  
\begin{align*}
r(x, y)&=(\tau\times {\rm id})\tilde r(\tau\times {\rm id})(x, y)
=(\tau(\tau(x)\rhd y), \tau(x))\\
&=(x\rhd \tau(y), \tau(x))\\
&=({\rm id}\times \tau)(x\rhd \tau(y), x)\\
&=({\rm id}\times \tau)\tilde r({\rm id}\times \tau)(x, y). 
\end{align*}
Set $T=\tau\times {\rm id}\times \tau$, then $T^2={\rm id}\times {\rm id}\times {\rm id}$,
$r\times {\rm id}=T(\tilde r\times {\rm id})T$ and ${\rm id}\times r=T({\rm id}\times \tilde r)T$
which implies that $(X, r)$ is a  solution of the Yang-Baxter equation. 
\end{proof}

Up to isomorphism, we list  examples of non-involutive near-rack solutions 
associated to racks listed in \cite[Table 9.1]{Heckenberger2015} or 
\cite[Table 17.1]{Heckenberger[2020]copyright2020} according to Proposition \ref{ConstructionNearRack}. 
Computations of the following examples are implemented by the software GAP. 
Some examples of t-equivalence related with those near-rack solutions and racks are presented in Appendix.  
\begin{example}
There is exactly one near-rack solution whose derived solution provided by the 
dihedral rack $\Bbb D_3$: $\sigma_1={\rm id}$, $\sigma_2=(1,3,2)$,  $\sigma_3=(1,2,3)$, 
$\tau=(2,3)\in\Bbb S_3$. 
\end{example}

\begin{example}
Fix an order of the conjugate class of $(1, 2) \in \Bbb S_4$ as 
$\{(1,2)$,  $(1,3)$,  $(1,4)$, $(2,3)$, $(2,4)$,$(3,4)\}$ and identify it as $[1,6]$,
 then the rack can be represented by 
 $\begin{pmatrix}
 1& 4& 5& 2& 3& 6 \\
4& 2& 6& 1& 5& 3 \\ 
5& 6& 3& 4& 1& 2 \\ 
2& 1& 3& 4& 6& 5 \\ 
3& 2& 1& 6& 5& 4 \\
1& 3& 2& 5& 4& 6
 \end{pmatrix}$.
There are exactly two  near-rack solutions whose derived solution provided by the
conjugate class of $(1,2)\in\Bbb S_4$: 
\begin{enumerate}
\item $\sigma_1={\rm id}$, $\sigma_2=(1,4,2)(3,5,6)$, $\sigma_3=(1,5,3)(2,4,6)$, 
$\sigma_4=(1,2,4)(3,6,5)$, $\sigma_5=(1,3,5)(2,6,4)$, $\sigma_6=(2,5)(3,4)$, 
$\tau=(2,4)(3,5)$; 
\item $\sigma_1=(2,3)(4,5)$, $\sigma_2=(1,4,6,3)(2,5)$, $\sigma_3=(1,5,6,2)(3,4)$, 
$\sigma_4=(1,2,6,5)(3,4)$,  $\sigma_5=(1,3,6,4)(2,5)$, $\sigma_6=(2,4)(3,5)$, 
$\tau=(2,5)(3,4)$. 
\end{enumerate}
\end{example}

\begin{example}
Fix an order of the conjugate class of $(1, 2, 3)$ in the alternating group ${\rm Alt}_4$ as
$\{(1,2,3), (1,3,4), (1,4,2), (2,4,3)\}$ and identify it as $[1,4]$, then the rack can be represented by
$\begin{pmatrix}
1& 3& 4& 2 \\
4& 2& 1& 3 \\
2& 4& 3& 1 \\
3& 1& 2& 4
\end{pmatrix}$. 
There is exactly one  near-rack solution whose derived solution provided by the
conjugate class of $(1,2,3)\in {\rm Alt}_4$:
$\sigma_1=(1,2,4)$, $\sigma_2=(1,3,2)$, $\sigma_3=(2,3,4)$, $\sigma_4=(1,4,3)$, $\tau=(1,4)(2,3)$.
\end{example}

\begin{example}
There are exactly two near-rack solutions whose derived solution provided by the
conjugate class of $(1,2,3,4)\in\Bbb S_4$ \cite[Example 17.2.5]{Heckenberger[2020]copyright2020}:
\begin{enumerate}
\item $\sigma_1=(2,3,5,4)$, $\sigma_2=(1,3)(2,5)(4,6)$, $\sigma_3=(1,5)(2,6)(3,4)$, 
$\sigma_4=(1,2)(3,4)(5,6)$, $\sigma_5=(1,4)(2,5)(3,6)$, $\sigma_6=(2,4,5,3)$, 
$\tau=(2,5)(3,4) \in \Bbb S_6$;
\item $\sigma_1=(1,2,3)(4,6,5)$, $\sigma_2=(1,6)(2,5)$, $\sigma_3=(1,3,5)(2,6,4)$, \\
$\sigma_4=(1,3,2)(4,5,6)$, $\sigma_5=(2,5)(3,4)$, $\sigma_5=(1,5,3)(2,4,6)$,\\ 
$\tau=(1,3)(2,5)(4,6)\in \Bbb S_6$.
\end{enumerate}
\end{example}

\begin{example}
There is exactly one near-rack solution whose derived solution provided by the 
affine rack ${\rm Aff}(5,2)$: 
$\sigma_1=(2,4,5,3)$, $\sigma_2=(1,5,2,3)$, $\sigma_3=(1,4,3,5)$, $\sigma_4=(1,3,4,2)$, 
$\sigma_5=(1,2,5,4)$, $\tau=(2,5)(3,4)\in\Bbb S_5$.  
\end{example}

\begin{example}
There is exactly one near-rack solution whose derived solution provided by the 
affine rack ${\rm Aff}(5,3)$: 
$\sigma_1= (2,3,5,4)$, $\sigma_2=(1,4,5,2)$, $\sigma_3=(1,2,4,3)$, 
$\sigma_4=(1,5,3,4)$, $\sigma_5=(1,3,2,5)$, $\tau=(2,5)(3,4)\in\Bbb S_5$. 
\end{example}

\begin{example}
There is exactly one near-rack solution whose derived solution provided by the 
affine rack ${\rm Aff}(7,3)$: 
 $\sigma_1=(2,5,3)(4,6,7)$,  $\sigma_2=(1,6,5)(2,3,7)$,  $\sigma_3=(1,4,2)(3,5,6)$, 
  $\sigma_4=(1,2,6)(4,7,5)$,  $\sigma_5=(1,7,3)(2,4,5)$, \\
$\sigma_6=(1,5,7)(3,6,4)$, $\sigma_7=(1,3,4)(2,7,6)$, $\tau=(2,7)(3,6)(4,5)\in\Bbb S_7$.
\end{example}

\begin{example}
There is exactly one near-rack solution whose derived solution provided by the 
affine rack ${\rm Aff}(7,5)$: 
$\sigma_1=(2,3,5)(4,7,6)$, $\sigma_2=(1,4,3)(2,6,7)$, $\sigma_3=(1,7,5)(3,4,6)$, 
$\sigma_4=(1,3,7)(2,5,4)$, $\sigma_5=(1,6,2)(4,5,7)$,\\
$\sigma_6=(1,2,4)(3,6,5)$, 
$\sigma_7=(1,5,6)(2,7,3)$, $\tau=(2,7)(3,6)(4,5)\in\Bbb S_7$.   
\end{example}

\begin{example}
Fix an order of the conjugate class of $(1, 2)$ in $\Bbb S_5$ as
\[\{(1,2),(1,3),(1,4),(1,5),(2,3),(2,4),(2,5),(3,4),(3,5),(4,5)\}, 
\]
and identify it as $[1,10]$, 
then the rack can be represented by 
 \[
 \begin{pmatrix}
  1&5&6&7&2&3&4&8&9&10\\
 5&2&8&9&1&6&7&3&4&10\\
 6&8&3&10&5&1&7&2&9&4\\ 
 7&9&10&4&5&6&1&8&2&3\\
 2&1&3&4&5&8&9&6&7&10\\
 3&2&1&4&8&6&10&5&9&7\\
 4&2&3&1&9&10&7&8&5&6\\
 1&3&2&4&6&5&7&8&10&9\\
 1&4&3&2&7&6&5&10&9&8\\
 1&2&4&3&5&7&6&9&8&10
 \end{pmatrix}.
 \]
There are exactly two  near-rack solutions whose derived solution provided by the
conjugate class of $(1,2)\in\Bbb S_5$: 
\begin{enumerate}
\item $\sigma_1={\rm id}$, $\sigma_2=(1,5,2)(3,6,8)(4,7,9)$,  $\sigma_3=(1,6,3)(2,5,8)(4,7,10)$, \\
          $\sigma_4=(1,7,4)(2,5,9)(3,6,10)$, $\sigma_5=(1,2,5)(3,8,6)(4,9,7)$, \\
          $\sigma_6=(1,3,6)(2,8,5)(4,10,7)$, $\sigma_7=(1,4,7)(2,9,5)(3,10,6)$, \\
          $\sigma_8=(2,6)(3,5)(4,7)(9,10)$, $\sigma_9=(2,7)(3,6)(4,5)(8,10)$, \\
          $\sigma_{10}=(2,5)(3,7)(4,6)(8,9)$, 
         $\tau=(2,5)(3,6)(4,7)$;
\item $\sigma_1=(3,4)(6,7)(8,9)$, $\sigma_2=(1,5,2)(3,7,8,4,6,9)$, \\
          $\sigma_3=(1,6,10,4)(2,5,8,9)(3,7)$, 
          $\sigma_4=(1,7,10,3)(2,5,9,8)(4,6)$, \\$\sigma_5=(1,2,5)(3,9,6,4,8,7)$, 
          $\sigma_6=(1,3,10,7)(2,8,9,5)(4,6)$, \\$\sigma_7=(1,4,10,6)(2,9,8,5)(3,7)$, 
          $\sigma_8=(2,6,4,5,3,7)(8,10,9)$, \\$\sigma_9=(2,7,3,5,4,6)(8,9,10)$, $\sigma_{10}=(2,5)(3,6)(4,7)$, \\
          $\tau=(2,5)(3,7)(4,6)(8,9)$.
\end{enumerate}
\end{example}

\begin{proposition}\label{All_Involutive_NearRacks}
Let  $X=\{1, 2, \cdots, |X|\}$ and  $(X, r)$ be an involutive near-rack solution. Then 
$(X, r)$ is isomorphic to a solution with  
\begin{equation}\label{InvolutiveSigma}
\sigma_x=\tau=\prod_{i=1}^k(2i-1, 2i)\in \Bbb S_{|X|}, \qquad \forall x\in X, 
\end{equation}
where  $k$ is  an integer with  $1\leq k\leq \frac{|X|}2$. 
\end{proposition}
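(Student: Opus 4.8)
The plan is to first use the involutivity $r^2 = \id_{X\times X}$ to determine the maps $\sigma_x$ completely, and then normalize $\tau$ by an isomorphism of solutions. Writing $r(x,y) = (\sigma_x(y), \tau(x))$, I would begin by computing
\[
r^2(x,y) = r(\sigma_x(y), \tau(x)) = \bigl(\sigma_{\sigma_x(y)}(\tau(x)),\, \tau(\sigma_x(y))\bigr).
\]
The condition $r^2 = \id_{X\times X}$ forces the second coordinate to satisfy $\tau(\sigma_x(y)) = y$ for all $x,y\in X$, that is $\tau\sigma_x = \id_X$ for every $x$; combined with $\tau^2 = \id$ this gives $\sigma_x = \tau$ for all $x\in X$. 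The first coordinate then reads $\sigma_{\tau(y)}(\tau(x)) = \tau^2(x) = x$, which holds automatically, so no further restriction arises. Hence every involutive near-rack solution takes the form $r(x,y) = (\tau(y), \tau(x))$ for some involution $\tau\neq\id$.

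Next I would record how isomorphisms act on such solutions. For $\varphi\in\Bbb S_{|X|}$, setting $r' = (\varphi\times\varphi)\,r\,(\varphi\times\varphi)^{-1}$ and computing directly gives $r'(u,v) = (\tau'(v), \tau'(u))$ with $\tau' = \varphi\tau\varphi^{-1}$. In particular $r'$ is again an involutive near-rack solution with $\sigma'_u = \tau'$ for all $u$, and one checks that the defining relations \eqref{YB_id} are preserved (they are automatic once all $\sigma_x$ coincide with an involution). Thus isomorphism classes of involutive near-rack solutions on $X$ are in bijection with conjugacy classes of nontrivial involutions in $\Bbb S_{|X|}$.

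To finish, I would invoke the standard structure of involutions in the symmetric group: any $\tau$ with $\tau\neq\id$, $\tau^2=\id$ is a product of $k$ disjoint transpositions for a unique integer $1\le k\le \lfloor |X|/2\rfloor$, with cycle type $(2^k, 1^{|X|-2k})$; here $k\ge 1$ because $\tau\neq\id$. Since permutations are conjugate in $\Bbb S_{|X|}$ exactly when they share a cycle type, $\tau$ is conjugate to $\prod_{i=1}^k(2i-1,2i)$, which has the same cycle type. Choosing $\varphi$ to implement this conjugation produces an isomorphic solution of the asserted normal form $\sigma_x = \tau = \prod_{i=1}^k(2i-1,2i)$.

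I do not anticipate a substantial obstacle: the involutive hypothesis collapses the family $\{\sigma_x\}$ to the single involution $\tau$ almost immediately, and the remaining normalization is the elementary fact that involutions of equal cycle type are conjugate. The only point requiring minor care is verifying that conjugation carries one involutive near-rack solution to another, so that the normalized object genuinely satisfies $\sigma'_x = \tau'$ and remains a valid solution; this is exactly the short computation in the second step.
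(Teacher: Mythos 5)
Your proposal is correct and follows essentially the same route as the paper: extract $\tau\sigma_x=\mathrm{id}$ (hence $\sigma_x=\tau$) from the second coordinate of $r^2=\mathrm{id}$, then normalize $\tau$ up to conjugation using the cycle-type classification of involutions in $\Bbb S_{|X|}$. The extra verifications you include (that the first coordinate imposes no further condition, and that conjugation preserves the class of involutive near-rack solutions) are details the paper leaves implicit but do not change the argument.
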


\begin{proof}
For any $x, y\in X$, we have 
\begin{align*}
(x, y)&=r^2(x, y)=r(\sigma_x(y), \tau(x))=(\sigma_{\sigma_x(y)}\tau(x), \tau\sigma_x(y)),  
\end{align*}
which implies that $y=\tau\sigma_x(y)$. So $\sigma_x=\tau$ for any $x\in X$. 
Up to conjugation, we can let $\tau$ be the permutation presented in \eqref{InvolutiveSigma}. 
\end{proof}

\section{Near-rack solutions and t-equivalence}
Let $(X, r)$ be a near-rack solution and $W_{X, r}=\bigoplus_{i\in X}\k w_i$ be a braided vector 
space with the braiding given by 
\begin{align}\label{YBEquation}
c(w_{i}\otimes w_j)
&=R_{i,j} w_{\sigma_i(j)}\otimes w_{\tau(i)}, \quad 
\text{where}\,\, R_{i,j}\in\k^\times\,\,\text{and} \\ 
R_{i, j}R_{\tau(i), k}R_{\sigma_i(j),\sigma_{\tau(i)}(k)}
&=R_{j,k}R_{i,\sigma_j(k)}R_{\tau(i),\tau(j)}, 
\quad \forall i, j, k\in X. \label{YBEquation}
\end{align}

\begin{lemma}
Let $(X, r)$ be a near-rack solution and $r(x, \tau(x))=(x, \tau(x))$ for some $x\in X$, then 
$R_{x, \tau(x)}=R_{\tau(x), x}$. 
\end{lemma}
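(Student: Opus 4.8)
The plan is to feed the fixed-point hypothesis directly into the scalar compatibility condition satisfied by the coefficients $R_{i,j}$, namely
\[
R_{i, j}R_{\tau(i), k}R_{\sigma_i(j),\sigma_{\tau(i)}(k)}
=R_{j,k}R_{i,\sigma_j(k)}R_{\tau(i),\tau(j)},\qquad \forall i,j,k\in X,
\]
and then to cancel the factors common to both sides. First I would translate the hypothesis into a statement about $\sigma$: by the lemma characterizing the fixed points of $r$, the equality $r(x,\tau(x))=(x,\tau(x))$ is equivalent to $\sigma_x(\tau(x))=x$, since the condition $y=\tau(x)$ is automatic here.

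The key step is to specialize the compatibility condition at $i=x$, $j=\tau(x)$, with $k\in X$ arbitrary. Using $\tau^2=\mathrm{id}$ together with $\sigma_x(\tau(x))=x$, I would compute the six factors. On the left-hand side the first factor is $R_{x,\tau(x)}$, the second is $R_{\tau(x),k}$, and since $\sigma_x(\tau(x))=x$ and $\tau(i)=\tau(x)$ the third factor is $R_{x,\sigma_{\tau(x)}(k)}$. On the right-hand side the first factor is $R_{\tau(x),k}$, the second is $R_{x,\sigma_{\tau(x)}(k)}$, and since $\tau(\tau(x))=x$ the third factor is $R_{\tau(x),x}$. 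Thus the identity collapses to
\[
R_{x,\tau(x)}\,\bigl(R_{\tau(x),k}\,R_{x,\sigma_{\tau(x)}(k)}\bigr)
=\bigl(R_{\tau(x),k}\,R_{x,\sigma_{\tau(x)}(k)}\bigr)\,R_{\tau(x),x}.
\]

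To conclude I would cancel the common bracketed factor, which is legitimate because every $R_{i,j}$ lies in $\k^\times$ and is therefore invertible; this yields $R_{x,\tau(x)}=R_{\tau(x),x}$ at once, and the argument is valid for any single choice of $k$, so no case analysis is required. I do not expect a genuine obstacle: the only nontrivial point is recognizing that the right specialization is $(i,j)=(x,\tau(x))$, precisely the one for which the fixed-point relation forces $\sigma_i(j)=x$ and $\tau(j)=x$, so that the two sides of the compatibility condition differ only in the ordered pair of indices whose symmetry we wish to establish.
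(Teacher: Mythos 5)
Your proof is correct and follows essentially the same route as the paper: specialize the scalar compatibility condition \eqref{YBEquation} at $(i,j)=(x,\tau(x))$, use $\sigma_x\tau(x)=x$ and $\tau^2=\mathrm{id}$, and cancel the invertible common factors. The only (harmless) difference is that your choice of arbitrary $k$ makes Corollary \ref{FixedPoints} unnecessary — the characterization $\sigma_x\tau(x)=x$ of the fixed point already suffices.
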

\begin{proof}
It is a direct result from Corollary \ref{FixedPoints} and  Formula \eqref{YBEquation}. 
\end{proof}

\begin{theorem}\label{KeyTheorem}
Let $(X, r)$ be a near-rack solution. 
Define an invertible map $\varphi: W_{X, r}\to W_{X, r}$ such that $\varphi(w_i)=z_i w_{\tau(i)}$ with $z_i\in\k^{\times}$ for any $i\in X$. Then 
$\tilde c=(\varphi^{-1} \otimes {\rm id})c(\varphi \otimes {\rm id})
=({\rm id}\otimes \varphi^{-1})c({\rm id}\otimes \varphi)
$ if and only if 
\begin{equation}\label{T-EquEq}
z_i^2=z_jz_{\sigma_i\tau(j)}\frac{R_{i, \tau(j)}}{R_{\tau(i), j}},\qquad \forall i, j\in X.
\end{equation}
In other words, if there exist non-zero parameters $z_i$ for $i\in X$ satisfying the equations 
\eqref{T-EquEq}, then $(W_{X, r}, c)$ is t-equivalent to $(W_{X, \tilde r}, \tilde c)$ which is of rack type, 
where $(X, \tilde r)$ is the  derived solution of $(X, r)$   and 
 $W_{X, r}=W_{X, \tilde r}$ as vector spaces.  
\end{theorem}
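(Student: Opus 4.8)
The plan is to verify the displayed equality by evaluating both candidate braidings on an arbitrary basis tensor $w_i\otimes w_j$, comparing the resulting scalars after first checking that the two target basis vectors coincide automatically. The preliminary observation I would record is the action of $\varphi^{-1}$: since $\varphi(w_i)=z_iw_{\tau(i)}$ and $\tau^2=\mathrm{id}$, one has $\varphi^{-1}(w_k)=z_{\tau(k)}^{-1}w_{\tau(k)}$, a formula used repeatedly below (note the $\tau$-twist in the subscript).

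Next I would compute the left-hand side. Applying $\varphi\otimes{\rm id}$, then $c$, then $\varphi^{-1}\otimes{\rm id}$ to $w_i\otimes w_j$, and using $\tau^2={\rm id}$ in the second tensor slot, I expect a multiple of $w_{\tau\sigma_{\tau(i)}(j)}\otimes w_i$ with coefficient $z_iR_{\tau(i),j}\,z_{\tau\sigma_{\tau(i)}(j)}^{-1}$. The key simplification is the near-rack relation $\tau\sigma_x=\sigma_{\tau(x)}\tau$ from \eqref{YB_id}: with $x=\tau(i)$ it gives $\tau\sigma_{\tau(i)}=\sigma_i\tau$, so the target index collapses to $\sigma_i\tau(j)=i\rhd j$. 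Symmetrically, applying ${\rm id}\otimes\varphi$, then $c$, then ${\rm id}\otimes\varphi^{-1}$ yields a multiple of $w_{\sigma_i\tau(j)}\otimes w_i$ with coefficient $z_jR_{i,\tau(j)}\,z_i^{-1}$. Thus both sides land on the same basis vector automatically, and the asserted equality reduces to the scalar identity $z_iR_{\tau(i),j}\,z_{\sigma_i\tau(j)}^{-1}=z_jR_{i,\tau(j)}\,z_i^{-1}$; clearing denominators turns this into exactly \eqref{T-EquEq}, establishing the ``if and only if''.

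For the final assertion I would invoke Lemma \ref{MainLemma} with $\varphi_1=\varphi$ and $\varphi_2={\rm id}$: its hypothesis is precisely the displayed double equality, so once \eqref{T-EquEq} holds, $(W_{X,r},\tilde c)$ is a braided vector space that is t-equivalent to $(W_{X,r},c)$. It then remains only to read off that $\tilde c$ is of rack type: the computation above gives $\tilde c(w_i\otimes w_j)=\frac{z_j R_{i,\tau(j)}}{z_i}\,w_{i\rhd j}\otimes w_i$, which is the braiding associated to the derived solution $\tilde r(i,j)=(i\rhd j,i)$, with $W_{X,r}=W_{X,\tilde r}$ as vector spaces.

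The calculations are routine once the bookkeeping is fixed, so the only genuine obstacle is organizational: I must apply $\varphi^{-1}$ in the correct slot, keeping the twist $\varphi^{-1}(w_k)=z_{\tau(k)}^{-1}w_{\tau(k)}$, and recognize that the two target indices $\tau\sigma_{\tau(i)}(j)$ and $\sigma_i\tau(j)$ agree by \eqref{YB_id}. Without that relation the two sides would not even be proportional; it is precisely the near-rack hypotheses $\tau^2={\rm id}$ and $\tau\sigma_x=\sigma_{\tau(x)}\tau$ that allow a single family of scalars $z_i$ to intertwine $c$ and $\tilde c$.
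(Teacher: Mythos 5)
Your proposal is correct and follows essentially the same route as the paper: evaluate both $(\varphi^{-1}\otimes{\rm id})c(\varphi\otimes{\rm id})$ and $({\rm id}\otimes\varphi^{-1})c({\rm id}\otimes\varphi)$ on $w_i\otimes w_j$, use $\tau\sigma_{\tau(i)}=\sigma_i\tau$ from \eqref{YB_id} to see that the target vectors $w_{\tau\sigma_{\tau(i)}(j)}\otimes w_i$ and $w_{\sigma_i\tau(j)}\otimes w_i$ coincide, and reduce the equality to the scalar condition $z_jR_{i,\tau(j)}z_i^{-1}=z_iR_{\tau(i),j}z_{\sigma_i\tau(j)}^{-1}$, which is \eqref{T-EquEq}. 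Your explicit appeal to Lemma \ref{MainLemma} with $\varphi_1=\varphi$, $\varphi_2={\rm id}$ for the t-equivalence conclusion is exactly what the paper leaves implicit, so nothing is missing.
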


\begin{remark}
If  there exist non-zero parameters $z_i$ for $i\in X$ satisfying the equations 
\eqref{T-EquEq}, then 
\begin{align*}
z_i^{2|X|}&=\prod_{j\in X}z_jz_{\sigma_i\tau(j)}\frac{R_{i, \tau(j)}}{R_{\tau(i), j}}
=\prod_{j\in X}z_j^2, \qquad \text{if  } i=\tau(i)\in X; \\
(z_iz_{\tau(i)})^{2|X|}&=\prod_{j\in X}
z_jz_{\sigma_i\tau(j)}\frac{R_{i, \tau(j)}}{R_{\tau(i), j}}
z_jz_{\sigma_{\tau(i)}\tau(j)}\frac{R_{\tau(i), \tau(j)}}{R_{i, j}}
=\prod_{j\in X}z_j^4, \qquad \forall i\in X. 
\end{align*}
\end{remark}

\begin{proof}
Set $\bar c=(\varphi^{-1} \otimes {\rm id})c(\varphi \otimes {\rm id})$
and  $\tilde c=({\rm id}\otimes \varphi^{-1})c({\rm id}\otimes \varphi)$, then 
\begin{align*}
\bar c(w_i\otimes w_j)&=(\varphi^{-1} \otimes {\rm id})c(\varphi \otimes {\rm id})(w_i\otimes w_j)\\
&=z_i(\varphi^{-1} \otimes {\rm id})c(w_{\tau(i)}\otimes w_j)\\
&=z_iR_{\tau(i), j}(\varphi^{-1} \otimes {\rm id})(w_{\sigma_{\tau(i)}(j)}\otimes w_i)\\
&=z_iR_{\tau(i), j}z_{\tau\sigma_{\tau(i)}(j)}^{-1}w_{\tau\sigma_{\tau(i)}(j)}\otimes w_i,\\
\tilde c(w_i\otimes w_j)&=({\rm id}\otimes \varphi^{-1})c({\rm id}\otimes \varphi)(w_i\otimes w_j)\\
&=z_j({\rm id}\otimes \varphi^{-1})c(w_i\otimes w_{\tau(j)})\\
&=z_jR_{i, \tau(j)}({\rm id}\otimes \varphi^{-1})(w_{\sigma_i\tau(j)}\otimes w_{\tau(i)})\\
&=z_jR_{i, \tau(j)}z_i^{-1}w_{\sigma_i\tau(j)}\otimes w_i\\
&=z_jR_{i, \tau(j)}z_i^{-1}w_{\tau\sigma_{\tau(i)}(j)}\otimes w_i.
\end{align*}
So $\bar c=\tilde c$ if and only if 
$
z_jR_{i, \tau(j)}z_i^{-1}=z_iR_{\tau(i), j}z_{\tau\sigma_{\tau(i)}(j)}^{-1}$
for any  $i, j\in X$. 
\end{proof}

\begin{corollary}\label{Involutive_T_equivalent}
Let  $(X, r)$ be an involutive  near-rack solution and $\varphi: W_{X, r}\mapsto W_{X, r}$ be an invertible map with 
$\varphi(w_i)=z_i w_{\tau(i)}$, where  
\begin{equation}\label{InvolutiveCase_Parameters}
z_i=\sqrt{\frac{R_{i,\tau(1)}}{R_{\tau(i),1}}}, \forall i\in X.
\end{equation}
Then 
$
\tilde c=(\varphi^{-1} \otimes {\rm id})c(\varphi \otimes {\rm id})
=({\rm id}\otimes \varphi^{-1})c({\rm id}\otimes \varphi), 
$ 
hence $(W_{X, r}, c)$ is t-equivalent to $(W_{X, \tilde r}, \tilde c)$. 
\end{corollary}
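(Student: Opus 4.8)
The plan is to reduce the statement to Theorem \ref{KeyTheorem} and then verify its hypothesis \eqref{T-EquEq} for the explicit parameters in \eqref{InvolutiveCase_Parameters}; the only genuine content turns out to be a single specialization of the Yang--Baxter compatibility satisfied by the scalars $R_{i,j}$.

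First I would record the simplifications coming from the involutive hypothesis. By Proposition \ref{All_Involutive_NearRacks} we have $\sigma_x=\tau$ for every $x\in X$, with $\tau^2={\rm id}$. Consequently $\sigma_i\tau(j)=\tau^2(j)=j$ for all $i,j\in X$, so the index $\sigma_i\tau(j)$ occurring in \eqref{T-EquEq} is just $j$ and $z_{\sigma_i\tau(j)}=z_j$. Hence the system \eqref{T-EquEq} collapses to the much simpler family $z_i^2=z_j^2\,R_{i,\tau(j)}/R_{\tau(i),j}$, equivalently $z_i^2R_{\tau(i),j}=z_j^2R_{i,\tau(j)}$, for all $i,j\in X$. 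By Theorem \ref{KeyTheorem} it then suffices to exhibit nonzero scalars $z_i$ solving this family, and the claim of the corollary is precisely that \eqref{InvolutiveCase_Parameters} is such a solution. I would first note that the square roots make sense: each $R_{i,\tau(1)}/R_{\tau(i),1}$ lies in $\k^\times$ and $\k$ is algebraically closed, so a nonzero square root exists; moreover only $z_i^2$ enters the equations, so the choice of signs is immaterial.

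Next I would substitute $z_i^2=R_{i,\tau(1)}/R_{\tau(i),1}$ into $z_i^2R_{\tau(i),j}=z_j^2R_{i,\tau(j)}$ and clear denominators, so that the desired identity becomes
\[
R_{i,\tau(1)}\,R_{\tau(i),j}\,R_{\tau(j),1}=R_{j,\tau(1)}\,R_{i,\tau(j)}\,R_{\tau(i),1},\qquad \forall\, i,j\in X.
\]
The key observation, and the one step that involves a choice rather than routine manipulation, is that this identity is exactly the Yang--Baxter compatibility \eqref{YBEquation} for the $R$'s evaluated at the triple $(i,\tau(j),1)$. In the involutive case that relation reads $R_{a,b}R_{\tau(a),c}R_{\tau(b),\tau(c)}=R_{b,c}R_{a,\tau(c)}R_{\tau(a),\tau(b)}$, and putting $(a,b,c)=(i,\tau(j),1)$, then using $\tau^2={\rm id}$ to simplify $R_{\tau(\tau(j)),\tau(1)}=R_{j,\tau(1)}$ and $R_{\tau(i),\tau(\tau(j))}=R_{\tau(i),j}$, produces precisely the two sides above (after reordering factors). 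Thus the displayed identity holds automatically, the parameters \eqref{InvolutiveCase_Parameters} satisfy \eqref{T-EquEq}, and Theorem \ref{KeyTheorem} gives $\tilde c=(\varphi^{-1}\otimes{\rm id})c(\varphi\otimes{\rm id})=({\rm id}\otimes\varphi^{-1})c({\rm id}\otimes\varphi)$ together with the asserted t-equivalence of $(W_{X,r},c)$ and $(W_{X,\tilde r},\tilde c)$.

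The main obstacle is therefore not the algebra but spotting the correct specialization $(i,\tau(j),1)$ that turns the two-variable identity into a single instance of the Yang--Baxter relation; once this triple is identified the verification is immediate, and no cocycle argument over all triples is needed. If the specialization were not seen, one would instead prove the multiplicative identity $\frac{R_{i,\tau(j)}}{R_{\tau(i),j}}\cdot\frac{R_{j,\tau(k)}}{R_{\tau(j),k}}=\frac{R_{i,\tau(k)}}{R_{\tau(i),k}}$ and then set $k=1$; it is worth remarking that this identity is itself merely \eqref{YBEquation} at $(i,\tau(j),k)$, so the two routes coincide.
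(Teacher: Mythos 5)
Your proposal is correct and follows essentially the same route as the paper: reduce to Theorem \ref{KeyTheorem}, note that involutivity collapses \eqref{T-EquEq} to $z_i^2/z_j^2=R_{i,\tau(j)}/R_{\tau(i),j}$, and verify that \eqref{InvolutiveCase_Parameters} satisfies this via the scalar Yang--Baxter relation \eqref{YBEquation}. The only difference is that you make explicit the specialization $(a,b,c)=(i,\tau(j),1)$, where the paper simply cites \eqref{YBEquation}; your identification of that triple is accurate.
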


\begin{remark}
$\tilde c(w_i\otimes w_j)=\frac{z_i}{z_j}R_{\tau(i),j} w_j\otimes w_i$ for any $i, j\in X$. 
\end{remark}

\begin{proof}
We only need to check that \eqref{InvolutiveCase_Parameters} is a solution of the equations 
\eqref{T-EquEq}:
\begin{align*}
\frac{z_i^2}{z_j^2}&=\frac{R_{i,\tau(1)}}{R_{\tau(i),1}}\cdot \frac{R_{\tau(j),1}}{R_{j,\tau(1)}}
\xlongequal{\eqref{YBEquation}} \frac{R_{i, \tau(j)}}{R_{\tau(i), j}}, \quad \forall i, j\in X.
\end{align*}
\end{proof}

\section{Finite-dimensional Nichols algebras associated to involutive near-rack solutions}

\begin{lemma}
Let $(X, r)$ be an involutive near-rack solution, $(W_{X, r}, c)$ be t-equivalent to 
$(W_{X, \tilde r}, \tilde c)$ as given in Corollary \ref{Involutive_T_equivalent}. 
Denote $q_{ij}=\frac{z_i}{z_j}R_{\tau(i),j}$ and $\tilde{q}_{ij}=q_{ij}q_{ji}$, then 
\begin{align*}
q_{ii}=q_{\tau(i)\tau(i)},\quad 
\tilde{q}_{ij}=\tilde{q}_{\tau(i)\tau(j)},\quad \forall i, j\in X. 
\end{align*} 
\end{lemma}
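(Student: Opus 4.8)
The plan is to translate both identities into statements about the scalars $R_{i,j}$ alone, since the factors $z_i$ will cancel out. By the Remark following Corollary~\ref{Involutive_T_equivalent} we have $q_{ij}=\frac{z_i}{z_j}R_{\tau(i),j}$, so that
\[
q_{ii}=R_{\tau(i),i},\qquad \tilde q_{ij}=q_{ij}q_{ji}=R_{\tau(i),j}R_{\tau(j),i},
\]
and likewise, using $\tau^2={\rm id}$, $q_{\tau(i)\tau(i)}=R_{i,\tau(i)}$ and $\tilde q_{\tau(i)\tau(j)}=R_{i,\tau(j)}R_{j,\tau(i)}$. Thus the two claimed equalities reduce respectively to
\[
R_{\tau(i),i}=R_{i,\tau(i)}\qquad\text{and}\qquad R_{\tau(i),j}R_{\tau(j),i}=R_{i,\tau(j)}R_{j,\tau(i)}.
\]

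For the first reduction I would invoke the earlier Lemma asserting that $r(x,\tau(x))=(x,\tau(x))$ forces $R_{x,\tau(x)}=R_{\tau(x),x}$. For an involutive near-rack $\sigma_x=\tau$, so $r(x,\tau(x))=(\tau^2(x),\tau(x))=(x,\tau(x))$ holds for \emph{every} $x$; taking $x=i$ gives $R_{i,\tau(i)}=R_{\tau(i),i}$, which settles $q_{ii}=q_{\tau(i)\tau(i)}$ at once.

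The substantive point is the second reduction, which I would derive from the hexagon relation \eqref{YBEquation}. Specializing $\sigma_x=\tau$ there yields
\[
R_{i,j}R_{\tau(i),k}R_{\tau(j),\tau(k)}=R_{j,k}R_{i,\tau(k)}R_{\tau(i),\tau(j)}\qquad(\forall\,i,j,k).
\]
Introducing the auxiliary quantity $P(i,j)=R_{i,j}/R_{\tau(i),\tau(j)}$, this rearranges into
\[
P(i,j)=P(j,k)\,\frac{R_{i,\tau(k)}}{R_{\tau(i),k}}\qquad(\forall\,i,j,k).
\]
The key trick is to set $i=k$: the last factor becomes $R_{k,\tau(k)}/R_{\tau(k),k}=1$ by the first reduction, so $P(k,j)=P(j,k)$, i.e. $P$ is symmetric. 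Writing $h(i,j)=R_{\tau(i),j}/R_{i,\tau(j)}$, the displayed identity says $h(i,j)=P(k,j)/P(i,k)$ for any $k$; multiplying the instances for $(i,j)$ and $(j,i)$ and using symmetry of $P$ collapses the right-hand side to $1$, giving $R_{\tau(i),j}R_{\tau(j),i}=R_{i,\tau(j)}R_{j,\tau(i)}$, which is exactly $\tilde q_{ij}=\tilde q_{\tau(i)\tau(j)}$.

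I expect the only genuine obstacle to be spotting the right bookkeeping device $P$ and the $i=k$ specialization that forces its symmetry; once that symmetry is in hand, the remainder is a one-line cancellation. A minor point to keep in mind throughout is that every $R_{i,j}\in\k^\times$, so all the divisions above are legitimate.
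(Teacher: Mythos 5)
Your proof is correct and follows essentially the same route as the paper: both reduce the claim to the scalar identities $R_{\tau(i),i}=R_{i,\tau(i)}$ and $R_{\tau(i),j}R_{\tau(j),i}=R_{i,\tau(j)}R_{j,\tau(i)}$, the first via the fixed-point lemma and the second from the relation \eqref{YBEquation} specialized to $\sigma_x=\tau$. The only difference is bookkeeping: the paper gets the second identity in one stroke by substituting $(i,\tau(j),i)$ into \eqref{YBEquation} and cancelling $R_{\tau(i),i}=R_{i,\tau(i)}$, whereas you reach the same cancellation through the auxiliary symmetric quantity $P$.
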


\begin{proof}
$q_{ii}=R_{\tau(i),i}=R_{i, \tau(i)}=q_{\tau(i)\tau(i)}$. According to  Formula \eqref{YBEquation}, we have
\[
R_{i,\tau(j)}R_{\tau(i), i}R_{j,\tau(i)}
=R_{\tau(j),i}R_{i,\tau(i)}R_{\tau(i),j},
\]
which implies $R_{i,\tau(j)}R_{j,\tau(i)}=R_{\tau(j),i}R_{\tau(i),j}$. Then 
\[
\tilde{q}_{\tau(i)\tau(j)}=R_{i,\tau(j)}R_{j,\tau(i)}=R_{\tau(i),j}R_{\tau(j),i}=\tilde{q}_{ij}.
\]
\end{proof}

\begin{example}\cite{Andruskiewitsch2018, Shi2020even, Shi2023, shi2023_NearRack}
Let $(X, r)$ be an involutive near-rack solution with $r(i, j)=(\tau(j), \tau(i))$, 
where $\tau=(1,2)\in\Bbb S_2$. 
Then $W_{X, r}=\bigoplus_{p=1}^2\k w_p$ is a braided vector space with a braiding given by 
\begin{align*}
c(w_1\otimes w_1)&=a w_2\otimes w_2,\quad 
&c(w_1\otimes w_2)&=b   w_1\otimes w_2,\\
c(w_2\otimes w_1)&=b w_2\otimes w_1,\quad 
&c(w_2\otimes w_2)&=e   w_1\otimes w_1.
\end{align*}
Define an invertible map $\varphi: W_{X, r}\to W_{X, r}$ via $\varphi(w_1)=w_2$, 
$\varphi(w_2)=\sqrt{\frac ea} w_1$, then 
$\tilde c=(\varphi^{-1}\otimes {\rm id})c(\varphi\otimes {\rm id})
=({\rm id}\otimes \varphi^{-1})c({\rm id}\otimes \varphi)$. 
So $(W_{X, r}, c)$ is t-equivalent to  $(W_{X, \tilde r}, \tilde c)$. The braided vector space  
$(W_{X, \tilde r}, \tilde c)$ is of diagonal type with
$q_{11}=q_{22}=b$,  $\tilde q_{12}=ae$, which implies 
\[
\dim \mathfrak{B}(W_{X, \tilde r}, \tilde c)=
\left\{\begin{array}{ll}
27, & ae=b^2, b^3=1\neq b, \quad \text{Cartan type $A_2$},\\
4m, &b=-1, ae\in\Bbb{G}_m^\prime,  m\geq 2, \quad 
\text{super type ${\bf A}_{2}(q;\{1,2\})$ },\\
m^2, &ae=1, b\in\Bbb{G}_m^\prime\,\,\text{for}\,\,m\geq 2, \quad \text{Cartan type $A_1\times A_1$},\\
\infty, & otherwise.
\end{array}\right.
\]
\end{example}

\begin{example}
Let $(X, r)$ be an involutive near-rack solution with $r(i, j)=(\tau(j), \tau(i))$, 
where $\tau=(1,2)\in\Bbb S_3$. 
Then $W_{X, r}=\bigoplus_{p=1}^3\k w_p$ is a braided vector space with a braiding given by 
\[
c(w_i\otimes w_j)=x_{3(i-1)+j} w_{\tau(j)}\otimes w_{\tau(i)}, \quad \forall i, j\in[1,3],
\]
where $x_k\in\Bbbk^\times$ for $k\in[1,9]$ and $x_4=x_2, x_8=x_6x_7x_3^{-1}, x_5=x_1x_6^2x_3^{-2}$.  
Define an invertible map $\varphi: W_{X, r}\to W_{X, r}$ via $\varphi(w_1)=z_1 w_2$, 
$\varphi(w_2)=z_2 w_1$, $\varphi(w_3)=z_3 w_3$, where $z_1z_2z_3\in \Bbbk^\times$, and
$z_2=\frac{x_6z_1}{x_3}$,  $z_3=z_1\left(\frac{x_6}{x_3}\right)^{\frac12}$.
Then $\tilde c=(\varphi^{-1}\otimes {\rm id})c(\varphi\otimes {\rm id})
=({\rm id}\otimes \varphi^{-1})c({\rm id}\otimes \varphi)$. So $(W_{X, r}, c)$ is t-equivalent to 
$(W_{X, \tilde r}, \tilde c)$. The braided vector space  $(W_{X, \tilde r}, \tilde c)$ is of diagonal type 
with 
\[
q_{11}=q_{22}=x_2,\quad q_{33}=x_9,\quad \tilde q_{12}=\frac{(x_1x_6)^2}{x_3^2}, \quad
\tilde q_{13}=\tilde q_{23}=x_6x_7.
\]
If $(x_1x_6)^2\neq x_3^2$ and $x_6x_7\neq 1$, then the  generalized Dynkin diagram of $(W_{X, r}, \tilde c)$ is given by 
$$
\xy 
(0,0)*\cir<4pt>{}="E1", 
(15,20)*\cir<4pt>{}="E2",
(30,0)*\cir<4pt>{}="E3",
(-4,0)*+{x_2},
(15,24)*+{x_9},
(15,4)*+{\frac{(x_1x_6)^2}{x_3^2}},
(34,0)*+{x_2},
(5.5,13)*+{x_6x_7},
(24.5,13)*+{x_6x_7},
\ar @{-}"E1";"E2"
\ar @{-}"E2";"E3"
\ar @{-}"E1";"E3"
\endxy
$$
\end{example}

\begin{example}
Let $(X, r)$ be an involutive near-rack solution with $r(i, j)=(\tau(j), \tau(i))$, 
where $\tau=(1,2)(3,4)\in\Bbb S_4$. 
Then $W_{X, r}=\bigoplus_{p=1}^4\k w_p$ is a braided vector space with a braiding given by 
\[
c(w_i\otimes w_j)=x_{3(i-1)+j} w_{\tau(j)}\otimes w_{\tau(i)}, \quad \forall i, j\in[1,4],
\]
where $x_k\in\Bbbk^\times$ for $k\in[1, 16]$ and 
\begin{align*}
x_5&=x_2, & x_{15}&=x_{12}, &x_{14}&=x_8x_9x_3^{-1},\\
x_{13}&=x_4x_{10}x_7^{-1}, &x_{16}&=x_4x_8x_{11}(x_3x_7)^{-1}, &
x_6&=x_1x_7x_8(x_3x_4)^{-1}. 
\end{align*}
Define an invertible map $\varphi: W_{X, r}\to W_{X, r}$ via $\varphi(w_1)=z_1 w_2$, 
$\varphi(w_2)=z_2 w_1$, $\varphi(w_3)=z_3 w_4$, $\varphi(w_4)=z_4 w_3$, 
where $z_1z_2z_3z_4\in \Bbbk^\times$, and
\begin{align*}
z_2=z_1\sqrt{\frac{x_7x_8}{x_3x_4}}, \quad z_3=z_1\sqrt{\frac{x_7}{x_4}}, \quad z_4=z_1\sqrt{\frac{x_8}{x_3}}.
\end{align*}
Then $\tilde c=(\varphi^{-1}\otimes {\rm id})c(\varphi\otimes {\rm id})
=({\rm id}\otimes \varphi^{-1})c({\rm id}\otimes \varphi)$. 
So $(W_{X, r}, c)$ is t-equivalent to  $(W_{X, \tilde r}, \tilde c)$. The braided vector space  
$(W_{X, \tilde r}, \tilde c)$ is of diagonal type with 
\begin{align*}
q_{11}&=q_{22}=x_2,  &
\tilde q_{13}&=\tilde q_{24}=x_4x_{10},&
\tilde q_{12}&=\frac{x_1^2x_7x_8}{x_3x_4},\\
q_{33}&=q_{44}=x_{12},  &
\tilde q_{14}&=\tilde q_{23}=x_8x_{9}, &
\tilde q_{34}&=\frac{x_{11}^2x_4x_8}{x_3x_7}.
\end{align*}
If $x_4x_{10}\neq 1$, $x_1^2x_7x_8\neq x_3x_4$, $x_8x_9\neq 1$, $x_{11}^2x_4x_8\neq x_3x_7$, 
then the  generalized Dynkin diagram of $(W_{X, \tilde r}, \tilde c)$ is given by
$$
\xy 
(0,0)*\cir<4pt>{}="E1", 
(30,0)*\cir<4pt>{}="E2",
(0,30)*\cir<4pt>{}="E3",
(30,30)*\cir<4pt>{}="E4",
(-5,0)*+{x_{12}},
(35,0)*+{x_{12}},
(-4,30)*+{x_2},
(34,30)*+{x_2},
(-5,15)*+{x_4x_{10}},
(35,15)*+{x_4x_{10}},
(7.5,12)*+{x_8x_9},
(23,12)*+{x_8x_9},
(15,26)*+{\frac{x_1^2x_7x_8}{x_3x_4}},
(15,4)*+{\frac{x_{11}^2x_4x_8}{x_3x_7}},
\ar @{-}"E1";"E2"
\ar @{-}"E2";"E3"
\ar @{-}"E1";"E3"
\ar @{-}"E2";"E4"
\ar @{-}"E1";"E4"
\ar @{-}"E3";"E4"
\endxy
$$
\end{example}

\begin{definition}\cite[Page 411]{Andruskiewitsch2017}
Let ${\bf A}_n(q;\Bbb J)$ be the generalized generalized Dynkin diagram 
$$\xy 
(0,0)*\cir<2pt>{}="E1", 
(23,0)*\cir<2pt>{}="E2",
(46,0)*\cir<2pt>{}="E3",
(69,0)*\cir<2pt>{}="E4",
(0,2.5)*+{q_{11}},
(11.5,2.5)*+{\tilde q_{12}},
(23,2.5)*+{q_{22}},
(34.5,-0.1)*+{\cdots\cdots\cdots\cdots},
(46,2.5)*+{q_{n-1n-1}},
(58,2.5)*+{\tilde q_{n-1n}},
(69,2.5)*+{\tilde q_{nn}},
\ar @{-}"E1";"E2"
\ar @{-}"E3";"E4"
\endxy,$$
where the labels satisfy the following requirements:
\begin{enumerate}
\item $q=q_{nn}^2\tilde q_{n-1n}\in \Bbbk^{\times}-\{\pm 1\}$;
\item if $i\in \Bbb J$, then $q_{ii}=-1$ and $\tilde q_{i-1 i}=\tilde q_{ii+1}^{-1}$;
\item if $i\notin \Bbb J$, then $\tilde q_{i-1i}=q_{ii}^{-1}
=\tilde q_{ii+1}$ (only the second equality if $i=1$, only the first if $i=n$).
\end{enumerate}
\end{definition}

\begin{remark}
${\bf A}_n(q;\Bbb J)$ is of Cartan type $A_n$ in case that $\Bbb J$ is an empty set,  otherwise 
it is of super type ${\bf A}_n$. 
\end{remark}

\begin{definition}
We say ${\bf A}_n(q;\Bbb J)$ is of symmetric super type if the following additional conditions are satisfied. 
\begin{enumerate}
\item $\Bbb J$ is not an empty set;
\item $q_{kk}=q_{(n+1-k)(n+1-k)}$, $\tilde q_{kk-1}=\tilde q_{(n-k)(n+1-k)}$ for  $1\leq k\leq \frac n2$;
\item if $n=2m$ is even and $m\notin \Bbb J$, then $\tilde q_{mm+1}=q_{mm}^2$, 
$q_{mm}=q_{m+1m+1}\in\Bbb G_3^\prime$. 
\end{enumerate}
\end{definition}

\begin{proposition}\label{LengthTwo}
Let $(X, r)$ with $r(i, j)=(\tau(j), \tau(i))$ be an involutive near-rack solution, 
where $\tau=(1, 2)\in \Bbb S_n$ with $n\geq 3$. 
Braided vector spaces $(W_{X, r}, c)$ and $(W_{X,\tilde r}, \tilde c)$ are t-equivalent as described in 
Corollary \ref{Involutive_T_equivalent}. 
If $\mathfrak{B}(W_{X, \tilde r}, \tilde c)$ 
is finite-dimensional and its generalized Dynkin diagram is connected, then $(W_{X, \tilde r}, \tilde c)$ is either one of 
the following cases:
\begin{enumerate}
\item  Cartan type $D_n$; 
\item   
$\xy 
(0,0)*\cir<0pt>{}="E1", 
(13,-6)*\cir<2pt>{}="E2",
(13,6)*\cir<2pt>{}="E3",
(-10,0)*+{{\bf A}_{n-2}(q^{-1}; \Bbb J)},
(6.5,5)*+{q},
(6.5,-5)*+{q},
(17,6.5)*+{q^{-1}},
(17,-6.5)*+{q^{-1}},
\ar @{-}"E1";"E2"
\ar @{-}"E1";"E3"
\endxy$ with $|\Bbb J|\geq 1$ or
$\xy 
(0,0)*\cir<0pt>{}="E1", 
(13,-6)*\cir<2pt>{}="E2",
(13,6)*\cir<2pt>{}="E3",
(-8,0)*+{{\bf A}_{n-2}(q; \Bbb J)},
(6.5,6)*+{q^{-1}},
(6.5,-6)*+{q^{-1}},
(16,6.5)*+{-1},
(16,0)*+{q^2},
(16,-6.5)*+{-1},
\ar @{-}"E1";"E2"
\ar @{-}"E1";"E3"
\ar @{-}"E2";"E3"
\endxy$, 
Super type ${\bf D}_n$ with $n\geq 4$, 
see \cite[Page 425]{Andruskiewitsch2017} and Table 4 in \cite{heckenberger2009classification}; 
\item $\xy 
(0,0)*\cir<2pt>{}="E1", 
(13,0)*\cir<2pt>{}="E2",
(6.5,10)*\cir<2pt>{}="E3",
(-3,0)*+{-1},
(6.5,2.5)*+{q^2},
(9,10)*+{q},
(16.5,0)*+{-1},
(2,6)*+{q^{-1}},
(13,6)*+{q^{-1}},
\ar @{-}"E1";"E2"
\ar @{-}"E2";"E3"
\ar @{-}"E1";"E3"
\endxy$,
$q\in\Bbb G^{\prime}_m$ for $m\in\Bbb Z^{> 2}$, see 
the line 6 of Table 2 in \cite{heckenberger2009classification};
\item Cartan type $A_3$;
\item Symmetric super type ${\bf A}_3(q;\Bbb J)$;
\item 
$\xy 
(0,0)*\cir<2pt>{}="E1", 
(13,0)*\cir<2pt>{}="E2",
(26,0)*\cir<2pt>{}="E3",
(0,2.5)*+{q},
(6.5,2.5)*+{q^{-1}},
(13,2.5)*+{-1},
(19.5,2.5)*+{q^{-1}},
(26,2.5)*+{q},
\ar @{-}"E1";"E2"
\ar @{-}"E2";"E3"
\endxy$ or 
$\xy 
(0,0)*\cir<2pt>{}="E1", 
(13,0)*\cir<2pt>{}="E2",
(6.5,10)*\cir<2pt>{}="E3",
(-3,0)*+{-1},
(6.5,2.5)*+{q^2},
(16.5,0)*+{-1},
(2,6)*+{q^{-1}},
(13,6)*+{q^{-1}},
(10,10)*+{-1},
\ar @{-}"E1";"E2"
\ar @{-}"E2";"E3"
\ar @{-}"E1";"E3"
\endxy$, type ${\rm D}(2, 1)$, 
$q\in\Bbb G^{\prime}_m$ for $m\in\Bbb Z^{> 2}$, see 
\cite[Page 432]{Andruskiewitsch2017} and lines 9, $10$, $11$ of Table 2 in \cite{heckenberger2009classification}; 
\item 
$\xy 
(0,0)*\cir<2pt>{}="E1", 
(13,0)*\cir<2pt>{}="E2",
(26,0)*\cir<2pt>{}="E3",
(0,2.5)*+{-1},
(6.5,2.5)*+{\zeta},
(13,2.5)*+{-1},
(19.5,2.5)*+{\zeta},
(26,2.5)*+{-1},
\ar @{-}"E1";"E2"
\ar @{-}"E2";"E3"
\endxy$, 
$\xy 
(0,0)*\cir<2pt>{}="E1", 
(13,0)*\cir<2pt>{}="E2",
(26,0)*\cir<2pt>{}="E3",
(0,2.5)*+{-1},
(6.5,2.5)*+{\zeta^{-1}},
(13,2.5)*+{-\zeta^{-1}},
(19.5,2.5)*+{\zeta^{-1}},
(26,2.5)*+{-1},
\ar @{-}"E1";"E2"
\ar @{-}"E2";"E3"
\endxy$ or
$\xy 
(0,0)*\cir<2pt>{}="E1", 
(13,0)*\cir<2pt>{}="E2",
(6.5,10)*\cir<2pt>{}="E3",
(-3,0)*+{\zeta},
(6.5,2.5)*+{\zeta^{-1}},
(16.5,0)*+{\zeta},
(2,6)*+{\zeta^{-1}},
(13,6)*+{\zeta^{-1}},
(10,10)*+{-1},
\ar @{-}"E1";"E2"
\ar @{-}"E2";"E3"
\ar @{-}"E1";"E3"
\endxy$,  type $\mathfrak{g}(2, 3)$, 
$\zeta\in \Bbb G_3^\prime$, 
see the line 15 of Table 2 in \cite{heckenberger2009classification} 
and \cite[Page 460]{Andruskiewitsch2017}; 
\item 
$\xy 
(0,0)*\cir<2pt>{}="E1", 
(13,0)*\cir<2pt>{}="E2",
(26,-6)*\cir<2pt>{}="E3",
(26,6)*\cir<2pt>{}="E4",
(0,-3)*+{\zeta^{-1}},
(6.5,2.5)*+{\zeta},
(13,-3)*+{\zeta},
(29.5,6)*+{-1},
(29.5,-6)*+{-1},
(19.5,-5.5)*+{\zeta^{-1}},
(19.5,5.5)*+{\zeta^{-1}},
\ar @{-}"E1";"E2"
\ar @{-}"E2";"E3"
\ar @{-}"E2";"E4"
\endxy$ or 
$\xy 
(0,0)*\cir<2pt>{}="E1", 
(13,0)*\cir<2pt>{}="E2",
(26,-6)*\cir<2pt>{}="E3",
(26,6)*\cir<2pt>{}="E4",
(0,-3)*+{\zeta^{-1}},
(6.5,2.5)*+{\zeta},
(13,-3)*+{\zeta^{-1}},
(29.5,6)*+{-1},
(29.5,-6)*+{-1},
(19.5,-5.5)*+{\zeta},
(19.5,5.5)*+{\zeta},
\ar @{-}"E1";"E2"
\ar @{-}"E2";"E3"
\ar @{-}"E2";"E4"
\endxy,$  type $\mathfrak{g}(3, 3)$, $\zeta\in\Bbb G_3^\prime$, 
see  \cite[Page 48]{Angiono2019a} and the line 18 of Table 3  in \cite{heckenberger2009classification}.
\end{enumerate}
\end{proposition}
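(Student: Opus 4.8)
The plan is to reduce the statement to Heckenberger's classification of finite-dimensional Nichols algebras of diagonal type and then impose the extra symmetry forced by $\tau=(1,2)$. First I would record the starting data: by Proposition \ref{All_Involutive_NearRacks} the involutive near-rack solution has $\sigma_x=\tau=(1,2)$ for all $x$, so the braiding reads $c(w_i\otimes w_j)=R_{i,j}\,w_{\tau(j)}\otimes w_{\tau(i)}$, and by Corollary \ref{Involutive_T_equivalent} the space $(W_{X,r},c)$ is t-equivalent to the diagonal braided vector space $(W_{X,\tilde r},\tilde c)$ with $\tilde c(w_i\otimes w_j)=q_{ij}\,w_j\otimes w_i$ and $q_{ij}=\frac{z_i}{z_j}R_{\tau(i),j}$. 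Thus $(W_{X,\tilde r},\tilde c)$ is genuinely of diagonal type, and since it is assumed finite-dimensional with connected generalized Dynkin diagram, it must occur among the connected arithmetic root systems of \cite{heckenberger2009classification}.

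The key new constraint is the symmetry recorded in the Lemma at the start of this section: $q_{ii}=q_{\tau(i)\tau(i)}$ and $\tilde q_{ij}=\tilde q_{\tau(i)\tau(j)}$ for all $i,j$. Since $\tau=(1,2)$ swaps the vertices $1,2$ and fixes $3,\dots,n$, this says exactly that the transposition of vertices $1$ and $2$ is an automorphism of the generalized Dynkin diagram. Concretely, vertices $1$ and $2$ form a \emph{twin pair}: $q_{11}=q_{22}$, and for every $k\geq 3$ one has $\tilde q_{1k}=\tilde q_{2k}$, so $1$ and $2$ attach to the rest of the diagram, and possibly to each other, in identical fashion. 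I would therefore reformulate the problem as: determine all connected diagrams in Heckenberger's tables that admit an order-two automorphism transposing exactly two vertices while fixing all the others.

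With this reformulation the proof becomes a finite inspection of the tables in \cite{heckenberger2009classification}. For arbitrary rank $n$ the only families carrying such a transposition are the fork diagrams, whose two tines are the twins: the Cartan type $D_n$ in case (1) and its two super analogues ${\bf D}_n$ in case (2); imposing the twin relations on the fork labels yields precisely the patterns displayed. For rank $3$ the path $A_3$ has its two endpoints as twins, giving case (4), while the remaining rank-$3$ entries admitting a transposition symmetry produce the triangle of case (3), the symmetric super type ${\bf A}_3(q;\Bbb J)$ of case (5), and the type ${\rm D}(2,1)$ and $\mathfrak g(2,3)$ diagrams of cases (6) and (7). The rank-$4$ exceptional whose graph is a fork over a two-vertex base gives type $\mathfrak g(3,3)$ in case (8). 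In each case I would read off the twin constraints $q_{11}=q_{22}$ and $\tilde q_{1k}=\tilde q_{2k}$ and verify that they are compatible with, and in fact single out, the labels in the stated diagram.

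The main obstacle is completeness: one must sweep through all connected arithmetic root systems of every rank and confirm that no diagram carrying a transposition symmetry has been overlooked, while correctly distinguishing the two possible incidences of the twin pair, namely non-adjacent twins (the plain fork and path cases, e.g.\ $D_n$ and $A_3$) versus adjacent twins (the triangle of case (3) and the fork-with-edge second variant of ${\bf D}_n$). This requires care in the small-rank exceptional and modular rows of Tables $2$--$4$ of \cite{heckenberger2009classification}, where several diagrams share the same underlying graph but differ in their edge labels; for each I would have to check that the transposition respects the labels, not merely the graph. Once the admissible diagrams are isolated, translating each back through $q_{ij}=\frac{z_i}{z_j}R_{\tau(i),j}$ into the normalized form of the statement is routine.
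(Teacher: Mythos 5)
Your proposal is correct and follows essentially the same route as the paper: invoke the symmetry lemma to see that the transposition of vertices $1$ and $2$ is an automorphism of the generalized Dynkin diagram (twin vertices), and then read off the admissible connected diagrams from Heckenberger's classification. The only organizational difference is that the paper first observes that at most one vertex $i\in[3,n]$ can be adjacent to the twin pair (two such vertices would create a $4$-cycle subdiagram, impossible for a finite-dimensional Nichols algebra), which pins the diagram down to a fork or triangle at the end of a chain before consulting the tables, whereas you fold this step into the table sweep itself.
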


\begin{proof}
Since the generalized Dynkin diagram of $(W_{X, \tilde r}, \tilde c)$ is connected, there exists some  $i\in [3, n]$ such that 
$i$ connected with vertexes $1$ and $2$.  Without loss of generality, we set $i=3$. 
Suppose there is some $j\in [4, n]$ such that $j$ is connected with $1$ and $2$, then 
the generalized Dynkin diagram of $(W_{X, \tilde r}, \tilde c)$ contains the subgraph  $\xy 
(0,0)*\cir<2pt>{}="E1", 
(6,0)*\cir<2pt>{}="E2",
(0,6)*\cir<2pt>{}="E3",
(6,6)*\cir<2pt>{}="E4",
(-2,0)*+{3},
(8,0)*+{1},
(-2,6)*+{2},
(8,6)*+{j},
\ar @{-}"E1";"E2"
\ar @{-}"E1";"E3"
\ar @{-}"E2";"E4"
\ar @{-}"E3";"E4"
\endxy$, which is contradicted with that $\mathfrak{B}(W_{X, r}, c)$ is finite dimensional. 
According to Heckenberger's work \cite{heckenberger2009classification}, 
the generalized Dynkin diagram of $(W_{X, \tilde r}, \tilde c)$  looks like 
$$\xy 
(0,0)*\cir<2pt>{}="E1", 
(13,0)*\cir<2pt>{}="E2",
(26,-6)*\cir<2pt>{}="E3",
(26,6)*\cir<2pt>{}="E4",
(6.5,0)*+{\cdots\cdots},
(13,-3)*+{q_{33}},
(29.5,6)*+{q_{11}},
(29.5,0)*+{\tilde q_{12}},
(29.5,-6)*+{q_{22}},
(19.5,-5.5)*+{\tilde q_{31}},
(19.5,5.5)*+{\tilde q_{32}},
\ar @{-}"E2";"E3"
\ar @{-}"E2";"E4"
\ar @{-}"E3";"E4"
\endxy \text{\quad or \quad}  
\xy 
(0,0)*\cir<2pt>{}="E1", 
(13,0)*\cir<2pt>{}="E2",
(26,-6)*\cir<2pt>{}="E3",
(26,6)*\cir<2pt>{}="E4",
(6.5,0)*+{\cdots\cdots},
(13,-3)*+{q_{33}},
(29.5,6)*+{q_{11}},
(29.5,-6)*+{q_{22}},
(19.5,-5.5)*+{\tilde q_{31}},
(19.5,5.5)*+{\tilde q_{32}},
\ar @{-}"E2";"E3"
\ar @{-}"E2";"E4"
\endxy,$$ 
where $q_{11}=q_{22}$ and $\tilde q_{31}=\tilde q_{32}$. 
\end{proof}

\begin{proposition}
Let $(X, r)$ with $r(i, j)=(\tau(j), \tau(i))$ be an involutive near-rack solution, 
where  $\tau=(1, 2)(3,4)\in \Bbb S_n$ with $n\geq 5$. 
Braided vector spaces $(W_{X, r}, c)$ and $(W_{X, \tilde r}, \tilde c)$ are t-equivalent as described in 
Corollary \ref{Involutive_T_equivalent}. 
If $\mathfrak{B}(W_{X, \tilde r}, \tilde c)$ 
is finite-dimensional and its generalized Dynkin diagram is connected, then $(W_{X, \tilde r}, \tilde c)$ is either one of 
the following cases:
\begin{enumerate}
\item Cartan type $E_6$;
\item Cartan type $A_5$;
\item symmetric super type ${\bf A}_5(q;\Bbb J)$;
\item 
$\xy 
(0,0)*\cir<2pt>{}="E1", 
(13,0)*\cir<2pt>{}="E2",
(26,0)*\cir<2pt>{}="E3",
(39,0)*\cir<2pt>{}="E4",
(19.5,8)*\cir<2pt>{}="E5",
(0,-3)*+{\zeta},
(6.5,2.5)*+{\zeta^{-1}},
(13,-3)*+{-1},
(19.5,-3)*+{\zeta},
(19.5,10)*+{-1},
(26,-3)*+{-1},
(39,-3)*+{\zeta},
(32.5,2.5)*+{\zeta^{-1}},
(24,5.5)*+{\zeta},
(15,5.5)*+{\zeta},
\ar @{-}"E1";"E2"
\ar @{-}"E2";"E3"
\ar @{-}"E3";"E4"
\ar @{-}"E2";"E5"
\ar @{-}"E3";"E5"
\endxy$ or 
$\xy 
(0,0)*\cir<2pt>{}="E1", 
(13,0)*\cir<2pt>{}="E2",
(26,0)*\cir<2pt>{}="E3",
(39,0)*\cir<2pt>{}="E4",
(52,0)*\cir<2pt>{}="E5",
(0,-3)*+{\zeta},
(6.5,2.5)*+{\zeta^{-1}},
(13,-3)*+{\zeta},
(19.5,2.5)*+{\zeta^{-1}},
(26,-3)*+{-1},
(39,-3)*+{\zeta},
(32.5,2.5)*+{\zeta^{-1}},
(45.5,2.5)*+{\zeta^{-1}},
(52,-3)*+{\zeta},
\ar @{-}"E1";"E2"
\ar @{-}"E2";"E3"
\ar @{-}"E3";"E4"
\ar @{-}"E4";"E5"
\endxy$, type $\mathfrak{g}(2,6)$, $\zeta\in\Bbb G_3^\prime$,   see \cite[Page 477]{Andruskiewitsch2017} 
and the line 11 of Table 4  in \cite{heckenberger2009classification}.
\end{enumerate}
\end{proposition}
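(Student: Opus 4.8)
The plan is to follow the strategy of Proposition \ref{LengthTwo}, with the single transposition $\tau=(1,2)$ replaced by the double transposition $\tau=(1,2)(3,4)$, and then to read off the admissible diagrams from Heckenberger's classification.

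First I would record the symmetry of the braiding. By Corollary \ref{Involutive_T_equivalent} the braided vector space $(W_{X,\tilde r},\tilde c)$ is of diagonal type, and by the Lemma opening this section its labels satisfy $q_{ii}=q_{\tau(i)\tau(i)}$ and $\tilde q_{ij}=\tilde q_{\tau(i)\tau(j)}$ for all $i,j\in X$. Hence $\tau=(1,2)(3,4)$ induces a \emph{label-preserving automorphism} $\pi$ of the generalized Dynkin diagram $\Gamma$ of $(W_{X,\tilde r},\tilde c)$. Since $\pi$ interchanges the distinct vertices $1,2$ and the distinct vertices $3,4$ while fixing $5,\dots,n$, it is a nontrivial involution with exactly two $2$-cycles and $n-4$ fixed points, that is, of cycle type $(2,2,1^{n-4})$.

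Because $\mathfrak{B}(W_{X,\tilde r},\tilde c)$ is finite dimensional and $\Gamma$ is connected, $\Gamma$ appears in Heckenberger's list \cite{heckenberger2009classification}, so the problem reduces to determining all connected, finite dimensional, diagonal-type Dynkin diagrams of rank $n\geq 5$ that carry a label-preserving involution of cycle type $(2,2,1^{n-4})$. To make the table-matching tractable I would first argue, exactly as in Proposition \ref{LengthTwo}, that finite dimensionality forbids a $K_{2,2}$ subdiagram (a $4$-cycle): since $1,2$ (respectively $3,4$) have identical adjacencies to the fixed set $\{5,\dots,n\}$, they share at most one neighbour there, and connectivity together with this constraint pins down the way the two $2$-cycles of $\pi$ attach to the $\pi$-fixed vertices. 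This local analysis reduces the candidate shapes to a short list, which I would then match against the tables. In rank $5$ the path-flip $(1,5)(2,4)$ realizes the required symmetry for Cartan type $A_5$, for the symmetric super type ${\bf A}_5(q;\Bbb J)$ (the non-symmetric super labellings being excluded precisely because $\pi$ preserves labels), and for the two displayed $\mathfrak{g}(2,6)$ diagrams, while $B_5,C_5,D_5$ are ruled out since their label pattern, respectively their unique nontrivial automorphism (a single fork transposition), is incompatible with two $2$-cycles. In rank $6$ the corresponding flip singles out Cartan type $E_6$, whereas the flip of $A_6$ has three $2$-cycles, that of $D_6$ only one, and $E_7,E_8$ together with the remaining families admit no such involution; the same rigidity shows that no connected finite dimensional diagram of rank $\geq 7$ carries an involution of cycle type $(2,2,1^{n-4})$. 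This forces $n\in\{5,6\}$ and leaves exactly the diagrams listed.

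The main obstacle is this final matching step. One must run through all of Heckenberger's connected finite dimensional diagonal diagrams of each rank $\geq 5$ — including the exceptional and super families, cf. the tables of \cite{heckenberger2009classification} and the super types of \cite{Andruskiewitsch2017} — and verify in every case the existence or non-existence of a label-preserving involution of cycle type $(2,2,1^{n-4})$. The symmetry requirement is very restrictive and eliminates most candidates immediately, but the bookkeeping for the exceptional and super types is delicate and, as with the examples in this paper, is most safely carried out with computer assistance.
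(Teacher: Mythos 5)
Your proposal is correct and follows essentially the same route as the paper's proof: both exploit the label symmetry $q_{ii}=q_{\tau(i)\tau(i)}$, $\tilde q_{ij}=\tilde q_{\tau(i)\tau(j)}$, exclude the forbidden subdiagrams (in particular four-cycles and configurations with two branch points), and then match the resulting constrained shapes against Heckenberger's classification. The paper merely carries out the intermediate structural step more concretely (pinning down the path $1\!-\!3\!-\!5\!-\!4\!-\!2$ before consulting the tables), which your reformulation via a label-preserving involution of cycle type $(2,2,1^{n-4})$ subsumes.
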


\begin{proof}
Let $i\in [5, n]$, then $\tilde q_{i1}=\tilde q_{i2}$ and $\tilde q_{i3}=\tilde q_{i4}$. 
Since the generalized Dynkin diagram of $(W_{X, \tilde r}, \tilde c)$ is connected, 
there exist some $i\in[5, n]$ and $j\in\{1, 3\}$
such that $\tilde q_{ij}=\tilde q_{i\tau(j)}\neq 1$. Without loss of generality, we let 
$\tilde q_{53}=\tilde q_{54}\neq 1$.  
The generalized Dynkin diagram of $(W_{X,\tilde r}, \tilde c)$ doesn't contain   subgraphs 
$\xy 
(0,0)*\cir<2pt>{}="E1", 
(6,0)*\cir<2pt>{}="E2",
(3,3)*\cir<2pt>{}="E5",
(0,6)*\cir<2pt>{}="E3",
(6,6)*\cir<2pt>{}="E4",
(-2,0)*+{1},
(8,0)*+{4},
(-2,6)*+{3},
(8,6)*+{2},
(5.5,3)*+{5},
\ar @{-}"E1";"E5"
\ar @{-}"E2";"E5"
\ar @{-}"E3";"E5"
\ar @{-}"E4";"E5"
\endxy$
and 
$\xy 
(0,0)*\cir<2pt>{}="E1", 
(6,0)*\cir<2pt>{}="E2",
(0,6)*\cir<2pt>{}="E3",
(6,6)*\cir<2pt>{}="E4",
(-2,0)*+{5},
(8,0)*+{4},
(-2,6)*+{3},
(8,6)*+{j},
\ar @{-}"E1";"E2"
\ar @{-}"E1";"E3"
\ar @{-}"E2";"E4"
\ar @{-}"E3";"E4"
\endxy$ for $j\in[6, n]$
which implies that  
\[
\tilde q_{51}=\tilde q_{52}=1,\quad
\tilde q_{j3}=\tilde q_{j4}= 1, \quad \forall j\in[6, n].
\] 
If it does not  exist some $i\in \{1, 2\}$ and $j\in \{3,4\}$ such that $\tilde q_{ij}\neq 1$, 
then there is some $k\in [6,n]$ such that $\tilde q_{k1}=\tilde q_{k2}\neq 1$ since the generalized Dynkin digram 
of $(W_{X, \tilde r}, \tilde c)$ is connected. This implies that  
$\tilde q_{k3}=\tilde q_{k4}=1$ and $\tilde q_{j1}=\tilde q_{j2}=1$ for any $j\in[5,n]-\{k\}$.
Now the generalized Dynkin diagram of $(W_{X, \tilde r}, \tilde c)$ contains a subgraph looks like 
$$\xy 
(0,6)*\cir<2pt>{}="E3", 
(0,-6)*\cir<2pt>{}="E4", 
(13,0)*\cir<2pt>{}="E5",
(39,0)*\cir<2pt>{}="E6",
(52,6)*\cir<2pt>{}="E1",
(52,-6)*\cir<2pt>{}="E2",
(26,0)*\cir<2pt>{}="E7",
(33,0)*\cir<26pt>{}="E8",
(-2,-6)*+{3},
(-2,6)*+{4},
(13,2.5)*+{5},
(32.5,-2)*+{[6,n]},
(39,2.5)*+{k},
(54,-6)*+{1},
(54,6)*+{2},
\ar @{-}"E3";"E5"
\ar @{-}"E4";"E5"
\ar @{-}"E1";"E6"
\ar @{-}"E2";"E6"
\ar @{-}"E4";"E5"
\ar @{-}"E5";"E7"
\endxy,$$
where the biggest circle is not a part of the generalized Dynkin diagram and any vertex in $[6, n]$ 
are  placed in the biggest circle. According to Heckenberger's work, the Nichols algebra is 
infinite dimensional in this situation. 

So there exist some  $i\in \{1, 2\}$ and $j\in \{3,4\}$ such that $\tilde q_{ij}\neq 1$. 
Without loss of generality, we set $\tilde q_{13}=\tilde q_{24}\neq 1$. Now we can see that 
the generalized Dynkin diagram contains a subgraph  looks like 
$$\xy 
(0,0)*\cir<2pt>{}="E1", 
(13,0)*\cir<2pt>{}="E3",
(26,0)*\cir<2pt>{}="E5",
(39,0)*\cir<2pt>{}="E4",
(52,0)*\cir<2pt>{}="E2",
(26,6)*\cir<2pt>{}="E7",
(26,11.5)*\cir<20pt>{}="E8",
(0,2.5)*+{1},
(13,2.5)*+{3},
(28,2.5)*+{5},
(39,2.5)*+{4},
(52,2.5)*+{2},
(26,13)*+{[6,n]},
\ar @{-}"E3";"E1"
\ar @{-}"E3";"E5"
\ar @{-}"E4";"E5"
\ar @{-}"E4";"E2"
\ar @{-}"E5";"E7"
\endxy,$$
with labels satisfying 
\[
q_{11}=q_{22}, \quad q_{33}=q_{44}, \quad 
\tilde q_{13}=\tilde q_{42}, \quad \tilde q_{35}=\tilde q_{54}. 
\]
According to Heckenberger's work \cite{heckenberger2009classification}, if $n\geq 6$, then $n=6$ and the generalized Dynkin diagram is Cartan type $E_6$; 
if $n=5$, then the generalized Dynkin diagram looks like \\
$\xy 
(0,0)*\cir<2pt>{}="E1", 
(13,0)*\cir<2pt>{}="E2",
(26,0)*\cir<2pt>{}="E3",
(39,0)*\cir<2pt>{}="E4",
(19.5,8)*\cir<2pt>{}="E5",
(0,-3)*+{1},
(13,-3)*+{3},
(22,8)*+{5},
(26,-3)*+{4},
(39,-3)*+{2},
\ar @{-}"E1";"E2"
\ar @{-}"E2";"E3"
\ar @{-}"E3";"E4"
\ar @{-}"E2";"E5"
\ar @{-}"E3";"E5"
\endxy\quad\text{or}\quad 
\xy 
(0,0)*\cir<2pt>{}="E1", 
(13,0)*\cir<2pt>{}="E2",
(26,0)*\cir<2pt>{}="E3",
(39,0)*\cir<2pt>{}="E4",
(52,0)*\cir<2pt>{}="E5",
(0,-3)*+{1},
(13,-3)*+{3},
(26,-3)*+{5},
(39,-3)*+{4},
(52,-3)*+{2},
\ar @{-}"E1";"E2"
\ar @{-}"E2";"E3"
\ar @{-}"E3";"E4"
\ar @{-}"E4";"E5"
\endxy.$
\end{proof}

\begin{proposition}
Let $(X, r)$ with $r(i, j)=(\tau(j), \tau(i))$ be an involutive near-rack solution, 
where  $\tau=(1, 2n)(2,2n-1)\cdots(n,n+1)\in \Bbb S_{2n}$ with $n\geq 2$. 
Braided vector spaces $(W_{X, r}, c)$ and $(W_{X, \tilde r}, \tilde c)$ are t-equivalent as described in 
Corollary \ref{Involutive_T_equivalent}. 
If $\mathfrak{B}(W_{X, \tilde r}, \tilde c)$ 
is finite-dimensional and its generalized Dynkin diagram is connected, then $(W_{X, \tilde r}, \tilde c)$ is of 
Cartan type $A_{2n}$ or symmetric super type ${\bf A}_{2n}(q;\Bbb J)$. 
\end{proposition}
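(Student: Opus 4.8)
The plan is to transfer the statement into the language of diagonal type Nichols algebras and then to exploit the reflection symmetry forced by $\tau$. By Corollary \ref{Involutive_T_equivalent} the braided vector space $(W_{X,\tilde r},\tilde c)$ is of diagonal type with $\tilde c(w_i\otimes w_j)=q_{ij}\,w_j\otimes w_i$, and the lemma opening this section gives $q_{ii}=q_{\tau(i)\tau(i)}$ and $\tilde q_{ij}=\tilde q_{\tau(i)\tau(j)}$ for all $i,j\in X$. Since $\tau=(1,2n)(2,2n-1)\cdots(n,n+1)$, writing $\iota(i)=2n+1-i$ these identities say exactly that the generalized Dynkin diagram $\Gamma$ of $(W_{X,\tilde r},\tilde c)$ is invariant under the graph automorphism $\iota$; and because $\tau$ has no fixed points, $\iota$ is a \emph{fixed-point-free} involution of the $2n$ vertices of $\Gamma$. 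This fixed-point-freeness is the structural feature that distinguishes the present case from Proposition \ref{LengthTwo} and the one following it, where $\tau$ had many fixed vertices and the exotic branched and triangle-type diagrams were allowed to appear.

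First I would show that $\Gamma$ is a path, i.e.\ of $A_{2n}$-shape. As $\Gamma$ is connected, of finite type and of rank $2n$, Heckenberger's classification \cite{heckenberger2009classification} restricts $\Gamma$ to the arbitrary-rank Cartan families $A_{2n},B_{2n},C_{2n},D_{2n}$ together with their super analogues and, for the small values $n=2,3$, to finitely many exotic diagrams. The key point is that a fixed-point-free $\iota$ cannot fix any vertex, so a branch point of $\Gamma$ could only occur in a symmetric pair $v,\iota(v)$ with $v\neq\iota(v)$; but no connected finite-type diagram of arbitrary large rank carries two branch points, which forces $\Gamma$ to have no branch point at all. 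The same observation excludes $\iota$-invariant cycles for large rank, so $\Gamma$ has every vertex of degree at most two and is therefore a path on $2n$ vertices that is reflected by $\iota$.

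Next I would run through Heckenberger's list to retain only the diagrams compatible with $\iota$. The diagrams $B_{2n}$ and $C_{2n}$ admit no nontrivial diagram automorphism and are discarded; the unique involution of $D_{2n}$ only swaps the two fork vertices and fixes the remaining $2n-2$, so it is not fixed-point-free; and the branched super families are removed by the same degree argument, while the finitely many exotic rank-$4$ and rank-$6$ diagrams (for $n=2,3$) are checked directly against Tables 1--4 of \cite{heckenberger2009classification} to admit no fixed-point-free involution realizing $\iota$. What survives are the path diagrams of Cartan type $A_{2n}$ and of super type $\mathbf A_{2n}(q;\Bbb J)$. For the latter, the invariance of $\Gamma$ under $\iota$ is precisely the system of defining identities $q_{kk}=q_{(2n+1-k)(2n+1-k)}$ together with the matching equalities of edge labels, so that whenever $\Bbb J\neq\varnothing$ we are exactly in the symmetric super type $\mathbf A_{2n}(q;\Bbb J)$; this yields the two claimed cases.

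The main obstacle is the exhaustive bookkeeping in the last step: confirming that none of the branched Cartan or super families, and none of the low-rank exotic diagonal type diagrams in Heckenberger's tables, admits the fixed-point-free involution $\iota$ apart from the $A$-type paths. This verification is finite but delicate, and it runs entirely parallel to the forbidden-subgraph analysis already carried out in Proposition \ref{LengthTwo} and its successor; it is precisely the step at which the vanishing of the fixed-point set of $\tau$ does the work, ruling out the diagrams (such as $D_{2n}$ or the triangle pieces) that were permitted once $\tau$ possessed fixed vertices.
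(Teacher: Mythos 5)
Your argument follows the paper's proof in all essentials: both exploit the fixed-point-free $\tau$-symmetry of the labelled generalized Dynkin diagram to show that a branch vertex (or a vertex adjacent to both $i$ and $\tau(i)$) would force a forbidden configuration, conclude that the diagram is a path reflected by $\tau$, and then read off Cartan type $A_{2n}$ or symmetric super type ${\bf A}_{2n}(q;\Bbb J)$ from Heckenberger's classification. The only difference is presentational: the paper exhibits the forbidden subgraphs (a $4$-cycle and a pair of disjoint claws at $k$ and $\tau(k)$) explicitly, whereas you phrase the same exclusions through the impossibility of two branch points in a connected finite-type diagram.
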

\begin{proof}
If there is a vertex $k$ connected with vertexes $i$ and $\tau(i)$, then 
$\tilde q_{ki}=q_{\tau(k)\tau(i)}\neq 1$ and $\tilde q_{\tau(k)i}=q_{k\tau(i)}\neq 1$. 
This means that the generalized Dynkin diagram of $(W_{X, \tilde r}, \tilde c)$ contains the subgraph 
$\xy 
(0,0)*\cir<2pt>{}="E1", 
(6,0)*\cir<2pt>{}="E2",
(0,6)*\cir<2pt>{}="E3",
(6,6)*\cir<2pt>{}="E4",
(-2,0)*+{k},
(8,0)*+{i},
(-4,6)*+{\tau(i)},
(10,6)*+{\tau(k)},
\ar @{-}"E1";"E2"
\ar @{-}"E1";"E3"
\ar @{-}"E2";"E4"
\ar @{-}"E3";"E4"
\endxy$, 
which is contradicted with the condition that the Nichols algebra is finite dimensional. 

Suppose that there are three vertexes $\{i_1, i_2, i_3\}\subseteq [1, 2n]$ connected with $k\in[1, 2n]$. 
Since  $\{i_1, i_2, i_3\}\bigcap \{\tau(i_1), \tau(i_2), \tau(i_3)\}$ is an empty set,  the generalized Dynkin diagram 
of $(W_{X,\tilde r}, \tilde c)$ contains  the following subgraph  
$$\xy 
(0,6)*\cir<2pt>{}="E1", 
(0,-6)*\cir<2pt>{}="E2",
(6,0)*\cir<2pt>{}="E3",
(14,0)*\cir<2pt>{}="E4",
(22,0)*\cir<2pt>{}="E5",
(30,0)*\cir<2pt>{}="E6",
(36,6)*\cir<2pt>{}="E7",
(36,-6)*\cir<2pt>{}="E8",
(-3,6)*+{i_1},
(-3,-6)*+{i_2},
(6,3)*+{k},
(-3,-6)*+{i_2},
(14,3)*+{i_3},
(22,3)*+{\tau(i_3)},
(29,-3)*+{\tau(k)},
(41,6)*+{\tau(i_1)},
(41,-6)*+{\tau(i_2)},
\ar @{-}"E1";"E3"
\ar @{-}"E2";"E3"
\ar @{-}"E4";"E3"
\ar @{-}"E6";"E5"
\ar @{-}"E6";"E7"
\ar @{-}"E6";"E8"
\endxy, $$
which implies that the Nichols algebra is infinite dimensional. 

So the generalized Dynkin diagram of $(W_{X, \tilde r}, \tilde c)$ looks like 
$\xy
(0,0)*\cir<2pt>{}="E1", 
(6,0)*\cir<2pt>{}="E2",
(12,0)*\cir<2pt>{}="E3",
(18,0)*\cir<2pt>{}="E4",
(9,0)*+{\cdots},
\ar @{-}"E1";"E2"
\ar @{-}"E3";"E4"
\endxy$. 
Observing that if a vertex $i$ is connected to $j$ then $\tau(i)$ is connected to $\tau(j)$, 
the rest is obvious according to Heckenberger's work \cite{heckenberger2009classification}.  
\end{proof}

\begin{proposition}\label{Length_k}
Let $(X, r)$ with $r(i, j)=(\tau(j), \tau(i))$ be an involutive near-rack solution, 
where  $\tau=(1, 2)(3,4)\cdots(2k-1,2k)\in \Bbb S_{n}$ with $n>2k\geq 6$. 
Braided vector spaces $(W_{X, r}, c)$ and $(W_{X,\tilde r}, \tilde c)$ are t-equivalent as described in 
Corollary \ref{Involutive_T_equivalent}. 
If $\mathfrak{B}(W_{X,\tilde r}, \tilde c)$ 
is finite-dimensional and its generalized Dynkin diagram is connected, then $n=2k+1$ and $(W_{X,\tilde r}, \tilde c)$ is of 
Cartan type $A_{2k+1}$ or symmetric super type ${\bf A}_{2k+1}(q;\Bbb J)$. 
\end{proposition}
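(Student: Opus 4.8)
The plan is to exploit the $\tau$-symmetry recorded in the lemma opening this section: since $q_{ii}=q_{\tau(i)\tau(i)}$ and $\tilde q_{ij}=\tilde q_{\tau(i)\tau(j)}$ for all $i,j\in X$, the permutation $\tau$ acts as a label-preserving automorphism of the generalized Dynkin diagram of $(W_{X,\tilde r},\tilde c)$. As $\tau$ moves the $2k\geq 6$ points $1,\dots,2k$, it is genuinely nontrivial on the vertex set, and its cycle structure is exactly the $k$ transposition pairs $\{2m-1,2m\}$ together with the $n-2k\geq 1$ fixed points $2k+1,\dots,n$; in particular the rank is $n\geq 2k+1\geq 7$. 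The whole argument is to show that a connected, $\tau$-symmetric, finite-dimensional diagram of such rank must be a path, and then to read off $n$ from the fixed-point count of $\tau$.

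First I would prove that the underlying graph is a path, by excluding branch vertices and cycles. If a non-fixed vertex $c$ is joined to both members of a pair $\{a,\tau(a)\}\neq\{c,\tau(c)\}$, then symmetry promotes the edges $c\!-\!a$ and $c\!-\!\tau(a)$ to $\tau(c)\!-\!\tau(a)$ and $\tau(c)\!-\!a$, producing the square $c\!-\!a\!-\!\tau(c)\!-\!\tau(a)\!-\!c$, which is forbidden for a finite-dimensional Nichols algebra; this is the analogue of the $4$-cycle obstruction used in the previous propositions. Next, a vertex of degree $\geq 3$ is impossible: a non-fixed branch vertex $c$ would force $\tau(c)$ to be a second branch vertex, whereas a connected finite-dimensional diagonal diagram has at most one branch vertex by Heckenberger's classification \cite{heckenberger2009classification}; while a fixed branch vertex would force the diagram to have a single branch vertex and hence a $D$- or $E$-type shape (including the super and modular analogues), whose label-preserving automorphism group permutes only the fork and therefore has at most two transposition pairs (and $E_6$ has rank $6<7$), contradicting $k\geq 3$. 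With all degrees $\leq 2$ the connected diagram is a path or a cycle, and a cycle of length $n\geq 7$ is the affine diagram $\widetilde A_{n-1}$, hence infinite-dimensional; so the underlying graph is the path $A_n$.

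It then remains to identify the labelling and pin down $n$. Among the path-shaped connected finite-dimensional diagonal types in \cite{heckenberger2009classification} -- the Cartan types $A_n,B_n,C_n$ and the super types ${\bf A}_n(q;\Bbb J)$ -- the types $B_n$ and $C_n$ carry asymmetric labels at the two ends and so are not preserved by the reversal; hence only Cartan type $A_n$ (uniform labels) and the reversal-symmetric super type, i.e.\ the \emph{symmetric} super type ${\bf A}_n(q;\Bbb J)$, survive. Since the path $A_n$ has automorphism group $\{\mathrm{id},\ \mathrm{reversal}\}$ and $\tau\neq\mathrm{id}$, the map $\tau$ must act as the reversal $i\mapsto n+1-i$, which has $\lfloor n/2\rfloor$ transposition pairs and $n\bmod 2$ fixed points. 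Matching this against the $k$ pairs and $n-2k\geq 1$ fixed points of $\tau$ forces $n-2k=1$, that is $n=2k+1$; and then the reversal has exactly $k$ pairs and one fixed central vertex, consistent with $\tau$. This yields the asserted conclusion: $(W_{X,\tilde r},\tilde c)$ is of Cartan type $A_{2k+1}$ or symmetric super type ${\bf A}_{2k+1}(q;\Bbb J)$.

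The hard part will be the reduction to a path in the second paragraph. In contrast to the no-fixed-point case, a fixed vertex \emph{is} allowed to connect to an entire pair -- this is precisely the central vertex of $A_{2k+1}$ joined to positions $k$ and $k+2$ -- so the branch-vertex exclusion cannot be reduced to a single $4$-cycle and genuinely relies on Heckenberger's list together with the incompatibility of the branch-type ($D$ and $E$) automorphisms with $k\geq 3$. Once the path shape is secured, the fixed-point count closes the argument immediately.
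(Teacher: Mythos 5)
Your argument is correct and reaches the stated conclusion, but by a genuinely different route from the paper. The paper (following the pattern of Proposition \ref{LengthTwo} and the two intermediate propositions) builds the diagram explicitly: it shows that the connected diagram must contain the chain $1-3-\cdots-(2k-1)-(2k+1)-2k-\cdots-4-2$, with at most the central fixed vertex $2k+1$ carrying a further attachment into $[2k+2,n]$, and then quotes \cite{heckenberger2009classification} to rule out that attachment and identify the labels. You instead treat $\tau$ as a nontrivial label-preserving automorphism of the diagram and let the automorphism groups of the admissible shapes do the work: for rank $n\geq 2k+1\geq 7$ the underlying graph must be $A_n$, $D_n$, $E_7$ or $E_8$, only the path admits an automorphism with $k\geq 3$ transpositions, and the reversal's fixed-point count then forces $n=2k+1$. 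This is cleaner and more conceptual; the paper's construction has the side benefit of exhibiting which original indices occupy which positions of the path.

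One caveat. Your blanket assertion that a connected finite-dimensional diagram of diagonal type has at most one branch vertex is false as stated: the rank-$5$ diagram of type $\mathfrak{g}(2,6)$ appearing in this very paper (a triangle with two tails) has two vertices of degree $3$. The assertion does hold in the rank range $n\geq 7$ relevant here, so your argument survives, but the rank restriction must be made explicit. Relatedly, your exclusion of a fixed branch vertex presupposes that the diagram is already a tree with a single branch point, whereas you only exclude cycles afterwards. The cleanest repair is to invoke at the outset that every connected finite-type generalized Dynkin diagram of rank $\geq 7$ in \cite{heckenberger2009classification} has underlying graph $A_n$, $D_n$, $E_7$ or $E_8$; your automorphism and label-symmetry arguments then close the proof immediately.
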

\begin{proof}
Similar to the proof of Proposition \ref{LengthTwo}, the generalized Dynkin diagram of $(W_{X,\tilde r}, \tilde c)$ looks like 
$$\xy 
(-13,0)*\cir<2pt>{}="E0", 
(0,0)*\cir<2pt>{}="E1", 
(13,0)*\cir<2pt>{}="E3",
(26,0)*\cir<2pt>{}="E5",
(39,0)*\cir<2pt>{}="E4",
(52,0)*\cir<2pt>{}="E2",
(65,0)*\cir<2pt>{}="E00",
(26,6)*\cir<2pt>{}="E7",
(26,13)*\cir<23pt>{}="E8",
(-13,2.5)*+{1},
(0,2.5)*+{3},
(6.5,0)*+{\cdots\cdots},
(13,2.5)*+{2k-1},
(26,-2.5)*+{2k+1},
(39,2.5)*+{2k},
(45.5,0)*+{\cdots\cdots},
(52,2.5)*+{4},
(65,2.5)*+{2},
(26,13)*+{[2k+2,n]},
\ar @{-}"E0";"E1"
\ar @{-}"E3";"E5"
\ar @{-}"E4";"E5"
\ar @{-}"E5";"E7"
\ar @{-}"E2";"E00"
\endxy.$$
According to \cite{heckenberger2009classification}, $n=2k+1$ and the generalized Dynkin diagram 
of $(W_{X,\tilde r}, \tilde c)$
is of Cartan type $A_{2k+1}$ or symmetric super type ${\bf A}_{2k+1}(q;\Bbb J)$. 
\end{proof}

\section{Appendix}
We present examples of t-equivalence here.  First we give 
necessary and sufficient conditions for $W_{X, r}$ to be a braided vector space
according to Formula \eqref{YBEquation}. Then we obtain a t-equivalence between 
$(W_{X, r}, c)$ and  some $(W_{X, \tilde r},\tilde c)$ according to Formula \eqref{T-EquEq}. 
The calculations are carried out by the software SageMath. 
\begin{example}
\label{T_equiv_DihedralD3}
Let $(X, r)$  be a near-rack solution with $r(i, j)=(\sigma_i(j), \tau(i))$, where 
$\sigma_1={\rm id}$, $\sigma_2=(1,3,2)$,  $\sigma_3=(1,2,3)$, 
$\tau=(2,3)\in\Bbb S_3$. 
Then $W_{X, r}=\bigoplus_{p=1}^3\k w_p$ is a braided vector space with a braiding given by 
\[
c(w_i\otimes w_j)=x_{3(i-1)+j} w_{\sigma_i(j)}\otimes w_{\tau(i)}, \quad \forall i, j\in[1,3],
\]
where $x_k\in\Bbbk^\times$ for $k\in[1,9]$ and
\[
x_6=x_8=x_1,\quad 
x_3=x_2^2x_1^{-1},\quad
x_7=x_1x_2x_5^{-1},\quad
x_9=x_2^2x_4^{-1},\quad x_1^3=x_2^3.
\]
If $x_2=x_1$, then there exists an invertible map 
$\varphi: W_{X, r}\to W_{X, r}$ via $\varphi(w_1)=w_1$, 
$\varphi(w_2)=\left(\frac{x_4x_5}{x_1^2}\right)^{\frac13}w_3$, $\varphi(w_3)=\left(\frac{x_1^2}{x_4x_5}\right)^{\frac13} w_2$, 
such that  $\tilde c=(\varphi^{-1}\otimes {\rm id})c(\varphi\otimes {\rm id})
=({\rm id}\otimes \varphi^{-1})c({\rm id}\otimes \varphi)$. So $(W_{X, r}, c)$ is t-equivalent to 
$(W_{X, \tilde r}, \tilde c)$. The braided vector space  $(W_{X, \tilde r}, \tilde c)$ is of type dihedral rack $\Bbb D_3$.  
\end{example}

\begin{example}[The unique near-rack solution related with $(1,2,3)^{{\rm Alt}_4}$]
\label{123Alt4}
Let $(X, r)$ be a near-rack given by $\sigma_1=(1,2,4)$, $\sigma_2=(1,3,2)$, $\sigma_3=(2,3,4)$, $\sigma_4=(1,4,3)$, $\tau=(1,4)(2,3)\in\Bbb S_4$. Then $W_{X, r}=\bigoplus_{p=1}^4\k w_p$ is a braided vector space with a braiding given by 
\[
c(w_i\otimes w_j)=x_{4(i-1)+j} w_{\sigma_i(j)}\otimes w_{\tau(i)}, \quad \forall i, j\in[1,4],
\]
where $x_k\in\Bbbk^\times$ for $k\in[1,16]$, $x_{3}^{6} x_{4}^{2} x_{5}^{2}=x_{1}^{6} x_{6}^{4}$, 
$x_{13}=x_{10}=x_7=x_4$ and
\begin{align*}
x_2&= \frac{x_{3}^{3}}{x_{1} x_{4}} ,&
x_8&= \frac{x_{3}^{2} x_{4} x_{5}}{x_{1}^{2} x_{6}} ,&
x_{9}&= \frac{x_{3} x_{4}^{2}}{x_{1} x_{6}} ,&
x_{12}&= \frac{x_{3}^{2} x_{4}^{4} x_{5}}{x_{1}^{3} x_{6}^{3}} ,\\
x_{15}&= \frac{x_{3} x_{4}^{3}}{x_{1}^{2} x_{6}} ,&
x_{11}&= \frac{x_{3} x_{4}}{x_{5}} ,&
x_{14}&= \frac{x_{3}^{2} x_{4}^{3} x_{5}}{x_{1}^{3} x_{6}^{2}} ,&
x_{16}&= \frac{x_{4}^{3} x_{5}}{x_{1} x_{3} x_{6}}.
\end{align*}
Define an invertible map 
$\varphi: W_{X, r}\to W_{X, r}$ via $\varphi(w_1)=w_4$, 
$\varphi(w_2)=\frac{x_{1} x_{6}}{x_{3}^{2}} w_3$, 
$\varphi(w_3)=\frac{x_{3}^{3} x_{4}^{3} x_{5}}{x_{1}^{4} x_{6}^{3}} w_2$, 
$\varphi(w_4)=\frac{x_{3} x_{4}^{3} x_{5}}{x_{1}^{3} x_{6}^{2}} w_1$, 
then  $\tilde c=(\varphi^{-1}\otimes {\rm id})c(\varphi\otimes {\rm id})
=({\rm id}\otimes \varphi^{-1})c({\rm id}\otimes \varphi)$. So $(W_{X, r}, c)$ is t-equivalent to 
$(W_{X, \tilde r}, \tilde c)$. 
\end{example}

\begin{example}[First near-rack solution related with $(1, 2, 3, 4)^{\Bbb S_4}$]
\label{1234S4}
Let $(X, r)$ be a near-rack given by $\sigma_1=(2,3,5,4)$, $\sigma_2=(1,3)(2,5)(4,6)$, $\sigma_3=(1,5)(2,6)(3,4)$, 
$\sigma_4=(1,2)(3,4)(5,6)$, $\sigma_5=(1,4)(2,5)(3,6)$, $\sigma_6=(2,4,5,3)$, 
$\tau=(2,5)(3,4) \in \Bbb S_6$. Then $W_{X, r}=\bigoplus_{p=1}^6\k w_p$ is a braided vector space with a braiding given by 
\[
c(w_i\otimes w_j)=x_{6(i-1)+j} w_{\sigma_i(j)}\otimes w_{\tau(i)}, \quad \forall i, j\in[1,6],
\]
where $x_k\in\Bbbk^\times$ for $k\in[1,36]$, $x_1^4 =x_6^4$, $x_{36}=x_{26}=x_{21}=x_{16}=x_{11}=x_1$ and
\begin{align*}
x_5&= \frac{x_{1}^{4}}{x_{2} x_{3} x_{4}} ,&
x_8&= \frac{x_{2}^{2} x_{3}^{2}}{x_{1}^{2} x_{6}} ,&
x_{29}&= \frac{x_{1}^{6}}{x_{2}^{2} x_{3}^{2} x_{6}} ,&
x_{9}&= \frac{x_{2}^{2} x_{3}^{2}}{x_{1} x_{6} x_{7}} ,\\
x_{32}&= \frac{x_{1} x_{6}}{x_{4}} ,&
x_{12}&= \frac{x_{2}^{2} x_{3}^{2}}{x_{1} x_{10} x_{6}} ,&
x_{31}&= x_{6} ,&
x_{13}&= \frac{x_{1}^{3} x_{7}}{x_{2}^{2} x_{4}} ,\\
x_{17}&= \frac{x_{1}^{4}}{x_{4} x_{6} x_{7}} ,&
x_{14}&= \frac{x_{1}^{4} x_{10} x_{6}}{x_{2}^{2} x_{3} x_{4}^{2}} ,&
x_{18}&= \frac{x_{1}^{3} x_{3}}{x_{10} x_{6}^{2}} ,&
x_{25}&= \frac{x_{1}^{6} x_{7}}{x_{2}^{3} x_{3}^{2} x_{4}} ,\\
x_{27}&= \frac{x_{1}^{4} x_{10} x_{6}^{2}}{x_{2}^{3} x_{3}^{2} x_{4}} ,&
x_{28}&= \frac{x_{1} x_{2} x_{4}}{x_{6} x_{7}} ,&
x_{30}&= \frac{x_{1}^{3} x_{2} x_{4}}{x_{10} x_{6}^{3}} ,&
x_{19}&= \frac{x_{1} x_{4} x_{7}}{x_{2} x_{3}} ,\\
x_{20}&= \frac{x_{2}^{3} x_{3} x_{4}}{x_{1}^{2} x_{6} x_{7}} ,&
x_{23}&= \frac{x_{1}^{4} x_{10}}{x_{2} x_{3}^{2} x_{6}} ,&
x_{24}&= \frac{x_{2}^{3} x_{3}^{2} x_{4}^{2}}{x_{1}^{5} x_{10}} ,&
x_{15}&= \frac{x_{1}^{6}}{x_{2}^{2} x_{4}^{2} x_{6}} ,\\
x_{22}&= \frac{x_{2}^{2} x_{4}^{2}}{x_{1}^{2} x_{6}} ,&
x_{33}&= \frac{x_{1} x_{6}}{x_{2}} ,&
x_{34}&= \frac{x_{2} x_{3} x_{4} x_{6}}{x_{1}^{3}} ,&
x_{35}&= \frac{x_{1} x_{6}}{x_{3}}.
\end{align*}
Define an invertible map 
$\varphi: W_{X, r}\to W_{X, r}$ via $\varphi(w_1)=w_1$, 
$\varphi(w_2)=\frac{x_{2} x_{3}}{x_{1}^{2}} w_5$, 
$\varphi(w_3)=\frac{x_{1}^{2}}{x_{2} x_{4}} w_4$, 
$\varphi(w_4)=\frac{x_{2} x_{4}}{x_{1}^{2}} w_3$, 
$\varphi(w_5)=\frac{x_{1}^{2}}{x_{2} x_{3}} w_2$, 
$\varphi(w_6)=\frac{x_{6}^{2}}{x_{1}^{2}} w_6$, 
then  $\tilde c=(\varphi^{-1}\otimes {\rm id})c(\varphi\otimes {\rm id})
=({\rm id}\otimes \varphi^{-1})c({\rm id}\otimes \varphi)$. So $(W_{X, r}, c)$ is t-equivalent to 
$(W_{X, \tilde r}, \tilde c)$. 
\end{example}

\begin{example}[Second near-rack solution related with $(1, 2, 3, 4)^{\Bbb S_4}$]
\label{T_Equiva_Second1234S4}
Let $(X, r)$ be a near-rack solution given by
$\sigma_1=(1,2,3)(4,6,5)$, $\sigma_2=(1,6)(2,5)$, $\sigma_3=(1,3,5)(2,6,4)$, 
$\sigma_4=(1,3,2)(4,5,6)$, $\sigma_5=(2,5)(3,4)$, $\sigma_5=(1,5,3)(2,4,6)$, 
$\tau=(1,3)(2,5)(4,6)\in \Bbb S_6$. Then $W_{X, r}=\bigoplus_{p=1}^6\k w_p$ is a braided vector space with a braiding given by 
\[
c(w_i\otimes w_j)=x_{6(i-1)+j} w_{\sigma_i(j)}\otimes w_{\tau(i)}, \quad \forall i, j\in[1,6],
\]
where $x_k\in\Bbbk^\times$ for $k\in[1,36]$,  $x_{34}=x_{26}=x_{24}=x_{13}=x_{11}=x_3$ and 
\begin{align*}
x_{6}&= \frac{x_{1} x_{2} x_{3}}{x_{4} x_{5}} ,&
x_7&= \frac{q_{2} x_{4} x_{5} x_{9}^{2}}{x_{2} x_{3}^{2}} ,&
x_8&= \frac{q_{3} x_{9}^{2}}{x_{3}} ,&
x_{10}&=  x_{9} ,\\
x_{12}&= \frac{q_{3}^{2} x_{2} x_{3}^{2}}{q_{2} x_{4} x_{5}} ,&
x_{14}&= \frac{q_{2} x_{4} x_{5} x_{9}}{q_{3}^{2} x_{1} x_{2}} ,&
x_{15}&= \frac{x_{3} x_{9}}{x_{2}} ,&
x_{16}&= \frac{x_{3}^{3}}{q_{3} x_{5} x_{9}} ,\\
x_{17}&= \frac{x_{3}^{3}}{q_{2} x_{1} x_{9}} ,&
x_{18}&= \frac{x_{3}^{2}}{ x_{4}} ,&
x_{19}&= x_{4} ,&
x_{20}&= \frac{x_{3} x_{4}}{q_{3} x_{1}} ,\\
x_{21}&= \frac{q_{3} x_{3} x_{4}}{x_{2}} ,&
x_{22}&= \frac{q_{3} x_{3} x_{4}}{x_{5}} ,&
x_{23}&= \frac{x_{4}^{2} x_{5}}{q_{3} x_{1} x_{2}} ,&
x_{25}&= \frac{x_{3}^{2}}{q_{2} x_{9}} ,\\
x_{27}&= \frac{q_{3} x_{3} x_{5}}{q_{2} x_{2}} ,&
x_{28}&= \frac{ x_{2} x_{3}^{3}}{q_{2} x_{5} x_{9}^{2}} ,&
x_{29}&= \frac{q_{3}^{2} x_{3}^{3}}{ x_{9}^{2}} ,&
x_{30}&= \frac{q_{2} x_{3}^{2}}{x_{9}} ,\\
x_{31}&= \frac{q_{3}^{2} x_{1} x_{9}}{q_{2} x_{4}} ,&
x_{32}&= \frac{ x_{5} x_{9}}{q_{3} x_{4}} ,&
x_{33}&= \frac{ x_{3}^{2}}{x_{4}} ,&
x_{35}&= \frac{q_{3} x_{2} x_{3}^{2}}{ x_{4} x_{9}} ,\\
x_{36}&= \frac{q_{2} q_{3} x_{1} x_{2} x_{3}^{3}}{x_{4}^{2} x_{5} x_{9}} ,&
q_2^2&=q_3^3=1.
\end{align*}
Suppose $q_3=1$,  we define an invertible map  
$\varphi: W_{X, r}\to W_{X, r}$ via 
$\varphi(w_1)=z_1w_3$, 
$\varphi(w_2)=\frac{\sqrt[3]{q_{2}} x_{9} z_{1}}{\sqrt[3]{x_{1} x_{2} x_{3}}} w_5$, 
$\varphi(w_3)=\frac{x_{3} z_{1} \sqrt[3]{x_{1} x_{2} x_{3}}}{\sqrt[3]{q_{2}} x_{1} x_{2}} w_1$, 
$\varphi(w_4)=\frac{q_{2}^{\frac{2}{3}} x_{4} z_{1} \sqrt[3]{x_{1} x_{2} x_{3}}}{q_{3}^{2} x_{1} x_{2}} w_6$, 
$\varphi(w_5)=\frac{q_{3} x_{3}^{2} z_{1}}{q_{2}^{\frac{2}{3}} x_{9} \sqrt[3]{x_{1} x_{2} x_{3}}} w_2$, 
$\varphi(w_6)=\frac{x_{3} z_{1}}{q_{2} x_{4}} w_4$, 
then  $\tilde c=(\varphi^{-1}\otimes {\rm id})c(\varphi\otimes {\rm id})
=({\rm id}\otimes \varphi^{-1})c({\rm id}\otimes \varphi)$. So $(W_{X, r}, c)$ is t-equivalent to 
$(W_{X, \tilde r}, \tilde c)$. 
\end{example}

\begin{example}[The unique near-rack solution related with Aff(5,2)]
\label{Aff52}
Let (X, r) be a near-rack solution given by $\sigma_1=(2,4,5,3)$, $\sigma_2=(1,5,2,3)$, $\sigma_3=(1,4,3,5)$, $\sigma_4=(1,3,4,2)$,  $\sigma_5=(1,2,5,4)$, $\tau=(2,5)(3,4)\in\Bbb S_5$.  
Then $W_{X, r}=\bigoplus_{p=1}^5\k w_p$ is a braided vector space with a braiding given by 
\[
c(w_i\otimes w_j)=x_{5(i-1)+j} w_{\sigma_i(j)}\otimes w_{\tau(i)}, \quad \forall i, j\in[1,5],
\]
where $x_k\in\Bbbk^\times$ for $k\in[1,25]$, $x_{22}=x_{18}=x_{14}=x_{10}=x_1$ and 
\begin{align*}
x_{5}&= \frac{x_{1}^{4}}{x_{2} x_{3} x_{4}} ,&
x_{7}&= \frac{x_{2} x_{4}}{x_{3}} ,&
x_8&= \frac{x_{2}^{3} x_{3} x_{4}^{3}}{x_{1}^{5} x_{6}} ,&
x_9&= \frac{x_{2} x_{4}}{x_{1}} ,\\
x_{11}&= \frac{x_{1} x_{3} x_{6}}{x_{4}^{2}} ,&
x_{12}&= \frac{x_{2} x_{3}}{x_{1}} ,&
x_{13}&= \frac{x_{2}^{2} x_{3}^{2} x_{4}}{x_{1}^{4}} ,&
x_{15}&= \frac{x_{2}^{2} x_{3} x_{4}}{x_{1}^{2} x_{6}} ,\\
x_{16}&= \frac{x_{1}^{7} x_{6}}{x_{2}^{3} x_{3}^{2} x_{4}^{2}} ,&
x_{17}&= \frac{x_{2} x_{4}^{2}}{x_{3} x_{6}} ,&
x_{19}&= \frac{x_{1}^{4}}{x_{2}^{2} x_{3}} ,&
x_{20}&= \frac{x_{1}^{3}}{x_{2} x_{3}} ,\\
x_{21}&= \frac{x_{1}^{6} x_{6}}{x_{2}^{3} x_{4}^{3}} ,&
x_{23}&= \frac{x_{1}^{3}}{x_{2} x_{4}} ,&
x_{24}&= \frac{x_{1} x_{4}}{x_{6}} ,&
x_{25}&= \frac{x_{1}^{4}}{x_{2} x_{4}^{2}}.
\end{align*}
Define an invertible map  
$\varphi: W_{X, r}\to W_{X, r}$ via 
$\varphi(w_1)=z_1w_1$, 
$\varphi(w_2)=\frac{x_{2} x_{4} z_{1}}{x_{1}^{2}} w_5$, 
$\varphi(w_3)=\frac{x_{2} x_{3} z_{1}}{x_{1}^{2}} w_4$, 
$\varphi(w_4)=\frac{x_{1}^{2} z_{1}}{x_{2} x_{3}} w_3$, 
$\varphi(w_5)=\frac{x_{1}^{2} z_{1}}{x_{2} x_{4}} w_2$, 
then  $\tilde c=(\varphi^{-1}\otimes {\rm id})c(\varphi\otimes {\rm id})
=({\rm id}\otimes \varphi^{-1})c({\rm id}\otimes \varphi)$. So $(W_{X, r}, c)$ is t-equivalent to 
$(W_{X, \tilde r}, \tilde c)$. 
\end{example}

\begin{example}[The unique near-rack solution related with Aff(5,3)]
\label{Aff53}
Let $(X, r)$ be a near-rack given by 
$\sigma_1= (2,3,5,4)$, $\sigma_2=(1,4,5,2)$, $\sigma_3=(1,2,4,3)$, 
$\sigma_4=(1,5,3,4)$, $\sigma_5=(1,3,2,5)$, $\tau=(2,5)(3,4)\in\Bbb S_5$. 
Then $W_{X, r}=\bigoplus_{p=1}^5\k w_p$ is a braided vector space with a braiding given by 
\[
c(w_i\otimes w_j)=x_{5(i-1)+j} w_{\sigma_i(j)}\otimes w_{\tau(i)}, \quad \forall i, j\in[1,5],
\]
where $x_k\in\Bbbk^\times$ for $k\in[1,25]$, $x_{22}=x_{18}=x_{14}=x_{10}=x_1$ and
\begin{align*}
x_{5}&= \frac{x_{1}^{4}}{x_{2} x_{3} x_{4}} ,&
x_{7}&= \frac{x_{2}^{2} x_{3}^{2}}{x_{1} x_{4} x_{6}} ,&
x_8&= \frac{x_{2} x_{3}}{x_{1}} ,&
x_{9}&= \frac{x_{2}^{2} x_{3}^{2} x_{4}}{x_{1}^{4}} ,\\
x_{11}&= \frac{x_{1}^{3} x_{6}}{x_{2}^{2} x_{3}} ,&
x_{12}&= \frac{x_{1}^{4}}{x_{2} x_{4}^{2}} ,&
x_{13}&= \frac{x_{1}^{4} x_{3}}{x_{2} x_{4}^{2} x_{6}} ,&
x_{15}&= \frac{x_{1}^{3}}{x_{2} x_{4}} ,\\
x_{16}&= \frac{x_{2} x_{4}^{2} x_{6}}{x_{1}^{3}} ,&
x_{17}&= \frac{x_{2} x_{4}}{x_{1}} ,&
x_{19}&= \frac{x_{2}^{2} x_{3} x_{4}}{x_{1}^{2} x_{6}} ,&
x_{20}&= \frac{x_{2} x_{4}}{x_{3}} ,\\
x_{21}&= \frac{x_{1}^{2} x_{4} x_{6}}{x_{2} x_{3}^{2}} ,&
x_{23}&= \frac{x_{1}^{4}}{x_{2}^{2} x_{3}} ,&
x_{24}&= \frac{x_{1}^{3}}{x_{2} x_{3}} ,&
x_{25}&= \frac{x_{1}^{5}}{x_{2} x_{3} x_{4} x_{6}}.
\end{align*}
Define an invertible map  
$\varphi: W_{X, r}\to W_{X, r}$ via 
$\varphi(w_1)=z_1w_1$, 
$\varphi(w_2)=\frac{x_{2} x_{3} z_{1}}{x_{1}^{2}} w_5$, 
$\varphi(w_3)=\frac{x_{1}^{2} z_{1}}{x_{2} x_{4}} w_4$, 
$\varphi(w_4)=\frac{x_{2} x_{4} z_{1}}{x_{1}^{2}} w_3$, 
$\varphi(w_5)=\frac{x_{1}^{2} z_{1}}{x_{2} x_{3}} w_2$, 
then  $\tilde c=(\varphi^{-1}\otimes {\rm id})c(\varphi\otimes {\rm id})
=({\rm id}\otimes \varphi^{-1})c({\rm id}\otimes \varphi)$. So $(W_{X, r}, c)$ is t-equivalent to 
$(W_{X, \tilde r}, \tilde c)$. 
\end{example}

\begin{example}[The first near-rack solution related with $(1,2)^{\Bbb S_4}$]
\label{T_Equiva_First12S4}
Let $(X, r)$ be the near-rack solution given by 
$\sigma_1={\rm id}$, $\sigma_2=(1,4,2)(3,5,6)$, $\sigma_3=(1,5,3)(2,4,6)$, 
$\sigma_4=(1,2,4)(3,6,5)$, $\sigma_5=(1,3,5)(2,6,4)$, $\sigma_6=(2,5)(3,4)$, 
$\tau=(2,4)(3,5)$. Then $W_{X, r}=\bigoplus_{p=1}^6\k w_p$ is a braided vector space with a braiding given by 
\[
c(w_i\otimes w_j)=x_{6(i-1)+j} w_{\sigma_i(j)}\otimes w_{\tau(i)}, \quad \forall i, j\in[1,6],
\]
where $x_k\in\Bbbk^\times$ for $k\in[1,36]$, $x_{36}=x_{17}=x_{27}=x_{10}=x_{20}=x_6=x_1$, 
$q^4=1$ and 
\begin{align*}
x_{3}&= \frac{x_{2}^{2}}{x_{1}} ,&
x_{4}&= \frac{x_{2}^{2}}{x_{1}} ,&
x_{5}&= x_{2} ,&
x_{12}&= \frac{x_{2}^{6} x_{7} x_{8}}{x_{1}^{5} x_{11} x_{9}} ,\\
x_{14}&= q^{2} x_{9} ,&
x_{15}&= \frac{q x_{1} x_{9}^{3}}{x_{13} x_{7} x_{8}} ,&
x_{16}&= \frac{x_{1}^{5} x_{11} x_{13}}{q x_{2}^{5} x_{7}} ,&
x_{18}&= \frac{x_{2}^{2} x_{9}^{2}}{x_{11} x_{13} x_{8}} ,\\
x_{19}&= \frac{x_{1} x_{2}}{x_{8}} ,&
x_{21}&= \frac{x_{1}^{2} x_{11} x_{9}}{x_{2} x_{7} x_{8}} ,&
x_{22}&= \frac{x_{2}^{2}}{x_{7}} ,&
x_{23}&= \frac{x_{2}^{3}}{x_{1} x_{9}} ,\\
x_{24}&= \frac{x_{2}^{4}}{x_{1}^{2} x_{11}} ,&
x_{25}&= \frac{x_{13} x_{2}^{2} x_{7} x_{8}}{q x_{1} x_{9}^{3}} ,&
x_{26}&= \frac{x_{1}^{4} x_{11} x_{13} x_{8}}{x_{2}^{4} x_{9}^{2}} ,&
x_{28}&= \frac{q^{2} x_{2}^{3}}{x_{1} x_{9}} ,\\
x_{29}&= \frac{x_{1} x_{2}}{x_{13}} ,&
x_{30}&= \frac{q x_{2}^{4} x_{7}}{x_{1}^{2} x_{11} x_{13}} ,&
x_{31}&= q^{2} x_{1} ,&
x_{32}&= \frac{x_{1}^{3} x_{13} x_{8}}{q x_{2}^{3} x_{9}} ,\\
x_{33}&= \frac{x_{2}^{3} x_{9}^{2}}{x_{1}^{2} x_{13} x_{8}} ,&
x_{34}&= \frac{x_{13} x_{2} x_{8}}{x_{9}^{2}} ,&
x_{35}&= \frac{q x_{1}^{3} x_{9}}{x_{13} x_{2} x_{8}},&
x_1^3&=x_2^3.
\end{align*}
If $x_1=x_2$, $q^2=1$, we define an invertible map 
$\varphi: W_{X, r}\to W_{X, r}$ via 
$\varphi(w_1)=z_1w_1$, 
$\varphi(w_2)=z_{1} \sqrt[3]{\frac{x_{7} x_{8}}{x_{1}^{2}}} w_4$, 
$\varphi(w_3)=\frac{x_{9} z_{1}}{q x_{1} \sqrt[3]{\frac{x_{7} x_{8}}{x_{1}^{2}}}} w_5$, 
$\varphi(w_4)=\frac{z_{1}}{\sqrt[3]{\frac{x_{7} x_{8}}{x_{1}^{2}}}} w_2$, 
$\varphi(w_5)=\frac{q x_{1} z_{1} \sqrt[3]{\frac{x_{7} x_{8}}{x_{1}^{2}}}}{x_{9}} w_3$, 
$\varphi(w_6)=\frac{z_{1}}{q} w_6$, 
then  $\tilde c=(\varphi^{-1}\otimes {\rm id})c(\varphi\otimes {\rm id})
=({\rm id}\otimes \varphi^{-1})c({\rm id}\otimes \varphi)$. So $(W_{X, r}, c)$ is t-equivalent to 
$(W_{X, \tilde r}, \tilde c)$. 
\end{example}

\begin{example}[The second near-rack solution related with $(1,2)^{\Bbb S_4}$]
\label{12S4}
Let $(X, r)$ be the near-rack solution given by 
$\sigma_1=(2,3)(4,5)$, $\sigma_2=(1,4,6,3)(2,5)$, $\sigma_3=(1,5,6,2)(3,4)$, 
$\sigma_4=(1,2,6,5)(3,4)$,  $\sigma_5=(1,3,6,4)(2,5)$, $\sigma_6=(2,4)(3,5)$, 
$\tau=(2,5)(3,4)$. 
Then $W_{X, r}=\bigoplus_{p=1}^6\k w_p$ is a braided vector space with a braiding given by 
\[
c(w_i\otimes w_j)=x_{6(i-1)+j} w_{\sigma_i(j)}\otimes w_{\tau(i)}, \quad \forall i, j\in[1,6],
\]
where $x_k\in\Bbbk^\times$ for $k\in[1,36]$, $x_{36}=x_{27}=x_{20}=x_{17}=x_{10}=x_6=x_1$, $q^4=1$ and 
\begin{align*}
x_{3}&= \frac{x_{2}^{2}}{x_{1}} ,&
x_{4}&= \frac{x_{2}^{2}}{x_{1}} ,&
x_{5}&= x_{2} ,&
x_{12}&= \frac{x_{1} x_{7} x_{8}}{x_{11} x_{9}} ,\\
x_{14}&= q^{2} x_{9} ,&
x_{15}&= \frac{q x_{1} x_{9}^{3}}{x_{13} x_{7} x_{8}} ,&
x_{16}&= \frac{x_{11} x_{13} x_{2}}{q x_{1} x_{7}} ,&
x_{18}&= \frac{x_{1}^{3} x_{9}^{2}}{x_{11} x_{13} x_{2} x_{8}} ,\\
x_{19}&= \frac{x_{1} x_{2}}{x_{8}} ,&
x_{21}&= \frac{x_{11} x_{2}^{2} x_{9}}{x_{1} x_{7} x_{8}} ,&
x_{22}&= \frac{x_{2}^{2}}{x_{7}} ,&
x_{23}&= \frac{x_{1}^{2}}{x_{9}} ,\\
x_{24}&= \frac{x_{1} x_{2}}{x_{11}} ,&
x_{25}&= \frac{x_{13} x_{2}^{2} x_{7} x_{8}}{q x_{1} x_{9}^{3}} ,&
x_{26}&= \frac{x_{1} x_{11} x_{13} x_{8}}{x_{2} x_{9}^{2}} ,&
x_{28}&= \frac{q^{2} x_{1}^{2}}{x_{9}} ,\\
x_{29}&= \frac{x_{1} x_{2}}{x_{13}} ,&
x_{30}&= \frac{q x_{1}^{4} x_{7}}{x_{11} x_{13} x_{2}^{2}} ,&
x_{31}&= q^{2} x_{1} ,&
x_{32}&= \frac{x_{13} x_{8}}{q x_{9}} ,\\
x_{33}&= \frac{x_{1} x_{9}^{2}}{x_{13} x_{8}} ,&
x_{34}&= \frac{x_{13} x_{2} x_{8}}{x_{9}^{2}} ,&
x_{35}&= \frac{q x_{1}^{3} x_{9}}{x_{13} x_{2} x_{8}} ,&
x_2^3&=x_1^3. 
\end{align*}
Define an invertible map 
$\varphi: W_{X, r}\to W_{X, r}$ via 
$\varphi(w_1)=z_1w_1$, 
$\varphi(w_2)=\frac{q x_{13} x_{8} z_{1}}{x_{2} x_{9}} w_5$, 
$\varphi(w_3)=\frac{x_{2} x_{9}^{2} z_{1}}{q^{2} x_{1} x_{13} x_{8}} w_4$, 
$\varphi(w_4)=\frac{q^{2} x_{1} x_{13} x_{8} z_{1}}{x_{2} x_{9}^{2}} w_3$, 
$\varphi(w_5)=\frac{x_{2} x_{9} z_{1}}{q x_{13} x_{8}} w_2$, 
$\varphi(w_6)=q^{2} z_{1} w_6$, 
then  $\tilde c=(\varphi^{-1}\otimes {\rm id})c(\varphi\otimes {\rm id})
=({\rm id}\otimes \varphi^{-1})c({\rm id}\otimes \varphi)$. So $(W_{X, r}, c)$ is t-equivalent to 
$(W_{X, \tilde r}, \tilde c)$. 
\end{example}

\begin{example}[The unique near-rack solution related with Aff(7,3)]
\label{Aff73}
Let $(X, r)$ be a near-rack solution given by 
$\sigma_1=(2,5,3)(4,6,7)$,  $\sigma_2=(1,6,5)(2,3,7)$,  $\sigma_3=(1,4,2)(3,5,6)$, 
  $\sigma_4=(1,2,6)(4,7,5)$,  $\sigma_5=(1,7,3)(2,4,5)$,  $\sigma_6=(1,5,7)(3,6,4)$, 
   $\sigma_7=(1,3,4)(2,7,6)$, $\tau=(2,7)(3,6)(4,5)\in\Bbb S_7$.
Then $W_{X, r}=\bigoplus_{p=1}^7\k w_p$ is a braided vector space with a braiding given by 
\[
c(w_i\otimes w_j)=x_{7(i-1)+j} w_{\sigma_i(j)}\otimes w_{\tau(i)}, \quad \forall i, j\in[1,7],
\]
where $x_k\in\Bbbk^\times$ for $k\in[1,49]$,   
$x_{44}=x_{38}=x_{32}=x_{26}=x_{20}=x_{14}=x_1$ and 
\begin{align*}
x_{5}&= \frac{x_{1}^{3}}{x_{2} x_{3}} ,&
x_{7}&= \frac{x_{1}^{3}}{x_{4} x_{6}} ,&
x_{10}&= \frac{x_{3}^{3} x_{9}^{2}}{x_{1}^{4}} ,&
x_{11}&= \frac{x_{3} x_{9}}{x_{1}} ,\\
x_{12}&= \frac{x_{3}^{3} x_{9}^{3}}{x_{1}^{2} x_{2} x_{6} x_{8}} ,&
x_{13}&= \frac{x_{2} x_{6}}{x_{1}} ,&
x_{15}&= \frac{x_{1} x_{3} x_{8}}{x_{4} x_{6}} ,&
x_{16}&= \frac{x_{3}^{4} x_{9}^{3}}{x_{1}^{3} x_{6}^{2} x_{8}} ,\\
x_{17}&= \frac{x_{2} x_{3}^{3} x_{9}}{x_{1}^{3} x_{6}} ,&
x_{18}&= \frac{x_{3} x_{4}}{x_{1}} ,&
x_{19}&= \frac{x_{3}^{3} x_{9}^{2}}{x_{1} x_{2} x_{6}^{2}} ,&
x_{21}&= \frac{x_{3}^{2} x_{9}}{x_{1} x_{6}} ,\\
x_{22}&= \frac{x_{1}^{2} x_{2} x_{4} x_{6}^{2} x_{8}}{x_{3}^{3} x_{9}^{3}} ,&
x_{23}&= \frac{x_{2} x_{4}}{x_{1}} ,&
x_{24}&= \frac{x_{2} x_{4} x_{6}}{x_{3} x_{9}} ,&
x_{25}&= \frac{x_{2} x_{4}^{2} x_{6}^{2}}{x_{1}^{2} x_{3} x_{9}} ,\\
x_{27}&= \frac{x_{2} x_{4} x_{6}}{x_{1} x_{8}} ,&
x_{28}&= \frac{x_{1} x_{2}^{2} x_{4} x_{6}}{x_{3}^{2} x_{9}^{2}} ,&
x_{29}&= \frac{x_{1}^{2} x_{8}}{x_{2} x_{4}} ,&
x_{30}&= \frac{x_{1}^{2} x_{3}^{2} x_{9}^{2}}{x_{2} x_{4}^{2} x_{6}^{2}} ,\\
x_{31}&= \frac{x_{3}^{4} x_{9}^{3}}{x_{1} x_{2} x_{4} x_{6}^{2} x_{8}} ,&
x_{33}&= \frac{x_{1}^{3} x_{3} x_{9}}{x_{2}^{2} x_{4} x_{6}} ,&
x_{34}&= \frac{x_{1}^{2} x_{3} x_{9}}{x_{2} x_{4} x_{6}} ,&
x_{35}&= \frac{x_{1}^{5}}{x_{2} x_{3} x_{4} x_{6}} ,\\
x_{36}&= \frac{x_{1}^{4} x_{2} x_{6}^{2} x_{8}}{x_{3}^{4} x_{9}^{3}} ,&
x_{37}&= \frac{x_{1}^{3} x_{6}}{x_{3}^{2} x_{9}} ,&
x_{39}&= \frac{x_{1}^{4} x_{4} x_{6}^{2}}{x_{3}^{4} x_{9}^{2}} ,&
x_{40}&= \frac{x_{1}^{2} x_{6}}{x_{2} x_{3}} ,\\
x_{41}&= \frac{x_{1}^{4} x_{6}}{x_{3}^{2} x_{4} x_{9}} ,&
x_{42}&= \frac{x_{1}^{3}}{x_{3} x_{8}} ,&
x_{43}&= \frac{x_{1}^{6} x_{6} x_{8}}{x_{3}^{4} x_{9}^{3}} ,&
x_{45}&= \frac{x_{1}^{2} x_{3}}{x_{4} x_{6}} ,\\
x_{46}&= \frac{x_{1} x_{4}}{x_{8}} ,&
x_{47}&= \frac{x_{1}^{3}}{x_{3} x_{9}} ,&
x_{48}&= \frac{x_{1}^{4} x_{6}}{x_{3}^{2} x_{9}^{2}} ,&
x_{49}&= \frac{x_{1}^{4}}{x_{3} x_{6} x_{9}}.
\end{align*}
Define an invertible map 
$\varphi: W_{X, r}\to W_{X, r}$ via 
$\varphi(w_1)=z_1w_1$, 
$\varphi(w_2)=\frac{x_{3} x_{9} z_{1}}{x_{1}^{2}}w_7$, 
$\varphi(w_3)=\frac{x_{3}^{2} x_{9} z_{1}}{x_{1}^{2} x_{6}} w_6$, 
$\varphi(w_4)=\frac{x_{2} x_{4} x_{6} z_{1}}{x_{1} x_{3} x_{9}} w_5$, 
$\varphi(w_5)=\frac{x_{1} x_{3} x_{9} z_{1}}{x_{2} x_{4} x_{6}} w_4$, 
$\varphi(w_6)=\frac{x_{1}^{2} x_{6} z_{1}}{x_{3}^{2} x_{9}} w_3$, 
$\varphi(w_7)=\frac{x_{1}^{2} z_{1}}{x_{3} x_{9}} w_2$, 
then  $\tilde c=(\varphi^{-1}\otimes {\rm id})c(\varphi\otimes {\rm id})
=({\rm id}\otimes \varphi^{-1})c({\rm id}\otimes \varphi)$. So $(W_{X, r}, c)$ is t-equivalent to 
$(W_{X, \tilde r}, \tilde c)$. 
\end{example}

\begin{example}[The unique near-rack solution related with Aff(7,5)]
\label{Aff75}
Let $(X, r)$ be a near-rack solution given by 
$\sigma_1=(2,3,5)(4,7,6)$, $\sigma_2=(1,4,3)(2,6,7)$, $\sigma_3=(1,7,5)(3,4,6)$, 
$\sigma_4=(1,3,7)(2,5,4)$, $\sigma_5=(1,6,2)(4,5,7)$, $\sigma_6=(1,2,4)(3,6,5)$, 
$\sigma_7=(1,5,6)(2,7,3)$, $\tau=(2,7)(3,6)(4,5)\in\Bbb S_7$. 
Then $W_{X, r}=\bigoplus_{p=1}^7\k w_p$ is a braided vector space with a braiding given by 
\[
c(w_i\otimes w_j)=x_{7(i-1)+j} w_{\sigma_i(j)}\otimes w_{\tau(i)}, \quad \forall i, j\in[1,7],
\]
where $x_k\in\Bbbk^\times$ for $k\in[1,49]$,  
$x_{44}=x_{38}=x_{32}=x_{26}=x_{20}=x_{14}=x_1$ and
\begin{align*}
x_{4}&= \frac{x_{1} x_{10}^{2} x_{2}^{2} x_{8}^{2}}{x_{6}^{3} x_{9}^{3}} ,&
x_{5}&= \frac{x_{1}^{3}}{x_{2} x_{3}} ,&
x_{7}&= \frac{x_{1}^{2} x_{6}^{2} x_{9}^{3}}{x_{10}^{2} x_{2}^{2} x_{8}^{2}} ,&
x_{11}&= \frac{x_{10}^{2} x_{2}^{3} x_{8}^{2}}{x_{6}^{3} x_{9}^{3}} ,\\
x_{12}&= \frac{x_{10} x_{2} x_{8}}{x_{6} x_{9}} ,&
x_{13}&= \frac{x_{10}^{3} x_{2}^{3} x_{8}^{3}}{x_{1} x_{6}^{3} x_{9}^{4}} ,&
x_{15}&= \frac{x_{1}^{2} x_{8}}{x_{2} x_{6}} ,&
x_{16}&= \frac{x_{1}^{2} x_{6} x_{9}^{2}}{x_{10} x_{2}^{2} x_{8}} ,\\
x_{17}&= \frac{x_{1}^{2} x_{6}^{3} x_{9}^{4}}{x_{10}^{2} x_{2}^{4} x_{8}^{2}} ,&
x_{18}&= \frac{x_{1}^{3} x_{9}^{2}}{x_{10} x_{2}^{2} x_{8}} ,&
x_{19}&= \frac{x_{1}^{3} x_{6}^{2} x_{9}^{3}}{x_{10} x_{2}^{3} x_{3} x_{8}^{2}} ,&
x_{21}&= \frac{x_{1} x_{3} x_{6}^{2} x_{9}^{3}}{x_{10}^{2} x_{2}^{2} x_{8}^{2}} ,\\
x_{22}&= \frac{x_{10} x_{2}^{3} x_{3} x_{8}^{2}}{x_{6}^{3} x_{9}^{3}} ,&
x_{23}&= \frac{x_{10} x_{2}^{3} x_{3}^{2} x_{8}}{x_{1}^{2} x_{6}^{2} x_{9}^{2}} ,&
x_{24}&= \frac{x_{10}^{2} x_{2}^{2} x_{3} x_{8}^{2}}{x_{6}^{3} x_{9}^{3}} ,&
x_{25}&= \frac{x_{1} x_{10}^{2} x_{2}^{3} x_{3} x_{8}^{2}}{x_{6}^{4} x_{9}^{4}} ,\\
x_{27}&= \frac{x_{10} x_{2}^{2} x_{3} x_{8}}{x_{6}^{2} x_{9}^{2}} ,&
x_{28}&= \frac{x_{2} x_{3}}{x_{8}} ,&
x_{29}&= \frac{x_{1}^{3} x_{6}^{2} x_{9}^{3}}{x_{10}^{2} x_{2}^{3} x_{3} x_{8}} ,&
x_{30}&= \frac{x_{1} x_{6}^{3} x_{9}^{3}}{x_{10} x_{2}^{2} x_{3} x_{8}^{2}} ,\\
x_{31}&= \frac{x_{1}^{2} x_{6}^{2} x_{9}^{2}}{x_{10} x_{2}^{2} x_{3} x_{8}} ,&
x_{33}&= \frac{x_{1}^{3} x_{6} x_{9}}{x_{2}^{2} x_{3}^{2}} ,&
x_{34}&= \frac{x_{1}^{2} x_{6}}{x_{2} x_{3}} ,&
x_{35}&= \frac{x_{1}^{2} x_{6}^{5} x_{9}^{5}}{x_{10}^{3} x_{2}^{4} x_{3} x_{8}^{3}} ,\\
x_{36}&= \frac{x_{1} x_{10} x_{2}^{2} x_{8}^{2}}{x_{6}^{2} x_{9}^{3}} ,&
x_{37}&= \frac{x_{2} x_{6}}{x_{1}} ,&
x_{39}&= \frac{x_{10}^{2} x_{2}^{3} x_{8}}{x_{6}^{2} x_{9}^{3}} ,&
x_{40}&= \frac{x_{1} x_{10} x_{2}^{2} x_{8}}{x_{3} x_{6} x_{9}^{2}} ,\\
x_{41}&= \frac{x_{10}^{2} x_{2}^{4} x_{3} x_{8}^{2}}{x_{1}^{2} x_{6}^{2} x_{9}^{4}} ,&
x_{42}&= \frac{x_{10} x_{2}^{2} x_{8}}{x_{6} x_{9}^{2}} ,&
x_{43}&= \frac{x_{1} x_{3}}{x_{10}} ,&
x_{45}&= \frac{x_{1}^{3} x_{6} x_{9}}{x_{10} x_{2}^{2} x_{8}} ,\\
x_{46}&= \frac{x_{1}^{2} x_{6} x_{9}}{x_{10} x_{2} x_{8}} ,&
x_{47}&= \frac{x_{1}^{4} x_{6}^{2} x_{9}^{3}}{x_{10}^{2} x_{2}^{3} x_{3} x_{8}^{2}} ,&
x_{48}&= \frac{x_{1} x_{6}}{x_{8}} ,&
x_{49}&= \frac{x_{1}^{2} x_{6}^{2} x_{9}^{2}}{x_{10}^{2} x_{2} x_{8}^{2}}.
\end{align*}
Define an invertible map 
$\varphi: W_{X, r}\to W_{X, r}$ via 
$\varphi(w_1)=z_1w_1$, 
$\varphi(w_2)=\frac{x_{10} x_{2} x_{8} z_{1}}{x_{1} x_{6} x_{9}}w_7$, 
$\varphi(w_3)=\frac{x_{1} x_{6} x_{9}^{2} z_{1}}{x_{10} x_{2}^{2} x_{8}} w_6$, 
$\varphi(w_4)=\frac{x_{10} x_{2}^{2} x_{3} x_{8} z_{1}}{x_{1} x_{6}^{2} x_{9}^{2}} w_5$, 
$\varphi(w_5)=\frac{x_{1} x_{6}^{2} x_{9}^{2} z_{1}}{x_{10} x_{2}^{2} x_{3} x_{8}}  w_4$, 
$\varphi(w_6)=\frac{x_{10} x_{2}^{2} x_{8} z_{1}}{x_{1} x_{6} x_{9}^{2}}  w_3$, 
$\varphi(w_7)=\frac{x_{1} x_{6} x_{9} z_{1}}{x_{10} x_{2} x_{8}} w_2$, 
then  $\tilde c=(\varphi^{-1}\otimes {\rm id})c(\varphi\otimes {\rm id})
=({\rm id}\otimes \varphi^{-1})c({\rm id}\otimes \varphi)$. So $(W_{X, r}, c)$ is t-equivalent to 
$(W_{X, \tilde r}, \tilde c)$. 
\end{example}

\end{document}